\def\titlerunning#1{\gdef\titrun{#1}}
\def\author#1{\gdef\autrun{\def\and{\unskip, }#1}\gdef\@author{#1}}
\def\address#1{{\def\and{\\\hspace*{18pt}}\renewcommand{\thefootnote}{}%
		\footnote {#1}}%
	\markboth{\autrun}{\titrun}}
\def\email#1{e-mail: #1}
\def\keywords#1{\par\medskip
	\noindent\textbf{Keywords.} #1}
\newtheorem{theorem}{Theorem}[section]
\newtheorem{corollary}[theorem]{Corollary}
\newtheorem{lemma}[theorem]{Lemma}
\newtheorem{proposition}[theorem]{Proposition}
\theoremstyle{definition}
\newtheorem{definition}[theorem]{Definition}
\newtheorem{remark}[theorem]{Remark}
\newtheorem{example}[theorem]{Example}
\numberwithin{equation}{section}
\def \C {\mathbb{C}}
\def \D {\mathcal{D}}
\def \a {\alpha }
\def \b {\beta}
\def \De {\Delta}
\def \la {\lambda}
\def \La {\Lambda}
\def\w {\omega}
\def\Om{\Omega}
\def\pa{\partial}
\begin{document}
	\baselineskip=17pt
	
	\titlerunning{On $L^{2}$-harmonic forms of complete almost K\"{a}hler manifold}
	\title{On $L^{2}$-harmonic forms of complete almost K\"{a}hler manifold}
	
	\author{Teng Huang}
	
	\date{}
	
	\maketitle
	
	\address{T. Huang: School of Mathematical Sciences, University of Science and Technology of China; CAS Key Laboratory of Wu Wen-Tsun Mathematics, University of Science and Technology of China, Hefei, Anhui, 230026, P. R. China; \email{htmath@ustc.edu.cn;htustc@gmail.com}}
	\begin{abstract}
	In this article, we study the $L^{2}$-harmonic forms on the complete $2n$-dimensional almost K\"{a}her manifold $X$.  We observe that the $L^{2}$-harmonic forms can decomposition into Lefschetz powers of primitive forms. Therefore we can extend vanishing theorems of $d$(bounded) (resp. $d$(sublinear)) K\"{a}hler  manifold proved by Gromov (resp. Cao-Xavier, Jost-Zuo) to almost K\"{a}hlerian case, that is, the spaces of all harmonic $(p,q)$-forms on $X$ vanishing unless $p+q=n$. We also give a lower bound on the spectra of the Laplace operator to sharpen the Lefschetz vanishing theorem on $d$(bounded) case.
	\end{abstract}
	\keywords{symplectic hyperbolic (parabolic); $L^{2}$-harmonic forms; vanishing theorem}
	\section{Introduction}
	A differential form $\a$ in a Riemannian manifold $(X,g)$ is called bounded with respect to the metric $g$ if the $L_{\infty}$-norm of $\a$ is finite, namely,
	$$\|\a\|_{L_{\infty}}=\sup_{x\in X}|\a(x)|<\infty.$$
	By definition, a $k$-form $\a$ is said to be $d$(bounded) if $\a=d\b$, where $\b$ is a bounded $(k-1)$-form. It is obvious that if $X$ is compact, then every exact form is $d$(bounded). However, when $X$ is not compact, there exist smooth differential forms which are exact but not $d$(bounded). For instance, on $\mathbb{R}^{n}$, $\a=dx^{1}\wedge\cdots\wedge dx^{n}$ is exact, but it is not $d$(bounded) \cite{CY,Gro}.  Let’s recall some concepts introduced in \cite{CX,Hitchin, JZ}.
	\begin{definition}
		A differential form $\a$ on a complete non-compact Riemannian manifold $(X,g)$ is called $d$(sublinear) if there exist a differential form $\b$ and a number $c>0$ such that $\a=d\b$ and 
		$$\ \|\b(x)\|_{L_{\infty}}\leq c(1+\rho_{g}(x,x_{0})),$$  
		where $\rho_{g}(x,x_{0})$ stands for the Riemannian distance between $x$ and a base point $x_{0}$ with respect to $g$.
	\end{definition}
	Let $(X, g)$ be a Riemannian manifold and $\pi:(\tilde{X},\tilde{g})\rightarrow(X,g)$ be the universal covering with $\tilde{g}=\pi^{\ast}g$. A form $\a$ on $X$ is called $\tilde{d}$(bounded) (resp. $\tilde{d}$(sublinear)) if $\pi^{\ast}\a$ is a $d$(bounded) (resp. $d$(sublinear)) form on $(\tilde{X},\tilde{g})$.
	In geometry, various notions of hyperbolicity have been introduced, and the typical examples are manifolds with negative curvature in suitable sense \cite{CY}. The starting point for the present investigation is Gromov's notion of K\"{a}hler hyperbolicity \cite{Gro}. Extending Gromov’s terminology, Cao-Xavier \cite{CX} and Jost-Zuo \cite{JZ} proposed the K\"{a}hler parabolicity. 

	Let $(X^{2n},\w)$ be a closed symplectic manifold. Let $J$ be an $\w$-compatible almost complex structure, i.e., $J^{2}=-id$, $\w(J\cdot, J\cdot)=\w(\cdot,\cdot)$,  and $g(\cdot,\cdot)=\w(\cdot, J\cdot)$ is a Riemannian metric on $X$. The triple $(\w, J, g)$ is called an almost K\"{a}hler structure on $X$. Notice that any one of the pairs $(\w, J)$, $(J, g)$ or $(g, \w)$ determines the other two. An almost-K\"{a}hler metric $(\w,J,g)$ is K\"{a}hler if and only if $J$ is integrable. For the symplectic case, inspired by K\"{a}hler geometry, Tan-Wang-Zhou \cite{Ked,TWZ} gave the definition of a symplectic hyperbolic (resp. parabolic) manifold. 
	\begin{definition}
		A closed almost K\"{a}hler manifold $(X,\w)$ is called symplectic hyperbolic (resp. parabolic) if the lift $\tilde{\w}$ of $\w$ to the universal covering $(\tilde{X},\tilde{\w})\rightarrow(X,\w)$ is $d$(bounded) (resp. $d$(sublinear)) on $(\tilde{X},\tilde{\w})$.
	\end{definition}
	\begin{example}
(1) Let $(X^{2n},\w)$ be a closed symplectic manifold. If $[\w]$ is aspherical and $\pi_{1}(X)$ is hyperbolic then, $\w$ is hyperbolic. In particular, if $X^{2n}$ admits a Riemannian metric of negative sectional curvature then, $\w$ is hyperbolic, see \cite[Corollary 1.13]{Ked}.\\
(2) Let $X^{2n}$ be a closed manifold of non-positive sectional curvature. If $X^{2n}$ is homeomorphic to a symplectic manifold, then $X^{2n}$ is symplectically parabolic \cite{CX,JZ}.
	\end{example}
	Tan-Wang-Zhou proved that if $(X^{2n},\w)$ is a closed symplectic parabolic manifold which satisfies the Hard Lefschetz Condition (HLC), then the spaces of $L^{2}$-harmonic forms $\mathcal{H}^{k}_{(2)}(\tilde{X})$ on the universal space $\tilde{X}$ are zero unless $k=n$ \cite{HTan,TWZ}. The Hard Lefschetz Condition is necessary in Tan-Wang-Zhou's theorem. Hind-Tomassini \cite{HT} constructed a $d$(bounded) complete almost K\"{a}hler manifold $X$ satisfying $\mathcal{H}^{1}_{(2)}(X)\neq\{0\}$ by using methods of contact geometry.
	
	In \cite{Gro}, Gromov developed $L^{2}$-Hodge theory for  K\"{a}hler manifolds, proving an $L^{2}$-Hodge decomposition Theorem for $L^{2}$-forms. As a consequence, for a complete and $d$(bounded) K\"{a}hler manifold $X$, denoting by $\mathcal{H}^{k}_{(2)}$, respectively $\mathcal{H}^{p,q}_{(2)}$, the space of $\De_{d}$-harmonic $L^{2}$-forms of degree $k$, respectively $\De_{d}=2\De_{\bar{\pa}}$-harmonic $L^{2}$-forms of bi-degree $(p,q)$, he showed that $\mathcal{H}^{k}_{(2)}\cong\bigoplus_{p+q=k}\mathcal{H}^{p,q}_{(2)}$; furthermore, denoting by $n=\dim_{\C}X$, that $\mathcal{H}^{k}_{(2)}=\{0\}$, for all $k\neq n$ and hence $\mathcal{H}^{p,q}_{(2)}=\{0\}$, for all $(p,q)$ such that $p+q\neq n$. Gromov \cite{Gro} also gave a lower bound on the spectra of the Laplace operator $\De_{d}:=dd^{\ast}+d^{\ast}d$ on $L^{2}$-forms $\Om^{p,q}(X)$ for $p+q\neq n$ to sharpen the Lefschetz vanishing theorem. The main purpose in this article is to extend the Gromov's results to almost K\"{a}hlerian case.
	\begin{theorem}\label{T1}[=Theorem \ref{T4} and \ref{T3}]
		Let $(X,J,\w)$ be a complete $2n$-dimensional almost K\"{a}hler  manifold with a $d$(sublinear) symplectic form $\w$.  Then 
		$$\mathcal{H}_{(2);J}^{p,q}(X)=\{0\}$$
		unless $k:=p+q=n$, where 
		$$\mathcal{H}_{(2);J}^{p,q}(X):=\{\a\in\Om^{p,q}_{(2);J}(X):\De_{d}\a=0\}.$$ 
		In particular, if $\w$ is $d$(bounded), i.e., there exists a bounded $1$-form $\theta$ such that $\w=d\theta$, then any $\a\in \Om_{J}^{p,q}\cap\Om^{k}_{0}$ on $X$ of degree $k:=p+q\neq n$ satisfies the inequality
		\begin{equation*}
		c_{n,k}\|\theta\|^{-2}_{L_{\infty}}\|\a\|^{2}_{L^{2}(X)}\leq\|d\a\|^{2}_{L^{2}(X)}+\|d^{\ast}\a\|^{2}_{L^{2}(X)},
		\end{equation*}
		where $c_{n,k}>0$ is a constant which depends only on $n,k$.
	\end{theorem}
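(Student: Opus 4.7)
The plan is to extend Gromov's strategy from the K\"ahler to the almost K\"ahler setting, with the principal new ingredient being a Lefschetz decomposition of $L^{2}$-harmonic forms available in the absence of the full K\"ahler identities. Since $\mathcal{H}^{p,q}_{(2);J}(X) \subseteq \mathcal{H}^{p+q}_{(2)}(X)$ and the Hodge star induces an isometric isomorphism $\mathcal{H}^{k}_{(2)}(X)\cong \mathcal{H}^{2n-k}_{(2)}(X)$, I reduce to showing that $\mathcal{H}^{p,q}_{(2);J}(X) = 0$ whenever $k := p+q < n$. Completeness of $(X,g)$ guarantees that any $\alpha$ in this space satisfies $d\alpha = d^{\ast}\alpha = 0$, and since each pure-bidegree component of $d\alpha$ vanishes separately, both $\alpha$ and $\overline{\alpha}$ are $d$-closed.

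The central step is the Lefschetz decomposition for harmonic forms. Because $\omega$ is symplectic, $[d,L]=0$, and dually $[d^{\ast},\Lambda]=0$; this is all that is automatic in the almost K\"ahler setting. The algebraic primitive decomposition already writes $\alpha = \sum_{r\geq 0} L^{r}\alpha_{r}$ with $\alpha_{r}$ primitive of bidegree $(p-r, q-r)$. What has to be shown, and what is the crucial structural observation announced in the abstract, is that each $\alpha_{r}$ is again $\Delta_{d}$-harmonic and lies in $L^{2}$. I would proceed by downward induction on $r$: apply $\Lambda^{j}$ repeatedly to $\alpha$ to isolate the lowest primitive component, using $[d^{\ast},\Lambda]=0$ together with $d^{\ast}\alpha=0$ to propagate $d^{\ast}$-closedness, and $[d,L]=0$ combined with $\mathfrak{sl}_{2}$-representation combinatorics to propagate $d$-closedness.

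With the decomposition in hand, the problem reduces to showing that a primitive $L^{2}$-harmonic $(p',q')$-form $\beta$ with $m := p'+q' < n$ vanishes. The pointwise Weil identity $\ast\overline{\beta} = c_{n,m}L^{n-m}\beta$, valid on any almost Hermitian manifold since it is purely algebraic in $\omega$ and $J$, gives
\begin{equation*}
\|\beta\|^{2}_{L^{2}} = c_{n,m}\int_{X}\beta\wedge\overline{\beta}\wedge\omega^{n-m}.
\end{equation*}
Since $d\omega=0$, one has $\omega^{n-m} = d(\theta\wedge\omega^{n-m-1})$ when $\omega = d\theta$, and the closedness of $\beta,\overline{\beta}$ gives $\beta\wedge\overline{\beta}\wedge\omega^{n-m} = \pm d(\beta\wedge\overline{\beta}\wedge\theta\wedge\omega^{n-m-1})$. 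In the $d$(bounded) case, Stokes on an exhaustion by geodesic balls closes the argument, the boundary contribution vanishing because $\beta\in L^{2}$ and $\theta$ is bounded. In the $d$(sublinear) case, I use a Cao--Xavier/Jost--Zuo cutoff $\chi_{R}$ with $|\nabla\chi_{R}|\leq c/R$ to balance the linear growth of $\theta$ against the $L^{2}$-decay of $\beta$. Vanishing of each primitive component then yields $\alpha = 0$.

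For the quantitative estimate when $\omega$ is $d$(bounded), I would apply the same identity to a smooth compactly supported $\alpha\in\Omega^{p,q}_{J}\cap\Omega^{k}_{0}$ with $k\neq n$, again decomposed as $\alpha=\sum L^{r}\alpha_{r}$. Without harmonicity, Stokes produces error terms controlled by $\|d\alpha\|_{L^{2}}$ and $\|d^{\ast}\alpha\|_{L^{2}}$, each weighted by $\|\theta\|_{L_{\infty}}\|\alpha\|_{L^{2}}$; cancelling one factor of $\|\alpha\|_{L^{2}}$ and absorbing combinatorial constants into a single $c_{n,k}$ yields the stated inequality. The principal obstacle throughout is the second step: the absence of the K\"ahler identity $[d^{\ast},L]=-d^{c}$ means the Nijenhuis-type error terms in $[\Delta_{d},L]$ do not obviously vanish, and it must be verified that they act trivially on $\ker d\cap\ker d^{\ast}\cap L^{2}$ using only $[d,L]=0$, $[d^{\ast},\Lambda]=0$, and the duality between $L$ and $\Lambda$.
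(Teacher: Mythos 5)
Your overall architecture matches the paper's: reduce to $k<n$ by Hodge duality, decompose the harmonic $(p,q)$-form into Lefschetz powers of primitive pieces, kill each primitive piece with the Weil identity plus a Gromov/Cao--Xavier cutoff-and-Stokes argument, and run the same computation quantitatively in the $d$(bounded) case. The endgame (Propositions analogous to the paper's \ref{P3} and \ref{P1}) is essentially correct as you describe it.

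The genuine gap is exactly the step you yourself flag as "the principal obstacle": showing that the primitive components $\a_{r}$ of an $L^{2}$-harmonic $(p,q)$-form are again closed (and harmonic). Your proposed toolkit --- $[d,L]=0$, its adjoint $[d^{\ast},\La]=0$, and $\mathfrak{sl}_{2}$ combinatorics --- is not enough to close it. From $d\a=0$ and $[d,L]=0$ you only get $\sum_{r}L^{r}d\a_{r}=0$, and since each $d\a_{r}$ is not primitive (it splits as $B^{0}+LB^{1}$ by Tseng--Yau's Lemma 2.4), the uniqueness of the Lefschetz decomposition yields only telescoping relations among the $B^{0}$'s and $B^{1}$'s of different levels, not $d\a_{r}=0$; the missing commutator $[d^{\ast},L]=-d^{\La\ast}$ is genuinely nonzero even in the K\"ahler case. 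The paper closes this gap by importing a third operator, $d^{\La}=[d,\La]$, and working with the Tseng--Yau operator $\D_{d+d^{\La}}=d^{\ast}d+d^{\La\ast}d^{\La}$, which commutes with \emph{both} $L$ and $\La$ on any symplectic manifold (this is where the nonvanishing error term $[d^{\ast},L]$ gets absorbed). The bridge back to $\De_{d}$ is Lemma \ref{L8}: $\|d^{\La}\a_{p,q}\|=\|d^{\ast}\a_{p,q}\|$ for forms of \emph{pure} bidegree, because $\mathcal{J}$ acts there as the scalar $(\sqrt{-1})^{p-q}$. That is also the only place the hypothesis $\a\in\Om^{p,q}_{J}$ enters, and it must enter somewhere: Hind--Tomassini's example of a $d$(bounded) complete almost K\"ahler manifold with $\mathcal{H}^{1}_{(2)}\neq\{0\}$ shows the statement is false for general $k$-forms, yet nothing in your outline visibly uses pure bidegree at the decisive step --- a symptom that the step as proposed cannot work. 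The same machinery (plus the orthogonality of Lefschetz levels, Lemma \ref{L11}) is what justifies your claim in the quantitative part that the Stokes error terms for $\a=\sum_{r}L^{r}\b_{k-2r}$ are controlled by $\|d\a\|+\|d^{\ast}\a\|$ rather than by the individual $\|d\b_{k-2r}\|$; that passage is Lemma \ref{L10} and is again a $\D_{d+d^{\La}}$ computation, not a consequence of $[d,L]=0$ alone.
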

Suppose that $(X^{2n},\w)$ is a  complete K\"{a}hler manifold. Let $\a_{k}$ be a $k$-form in $X^{2n}$. We denote $\a_{k}:=\sum_{p+q=k}\a_{p,q}$, where $\a_{p,q}\in\Om^{p,q}(X)$. We have
$$\langle\De_{d}\a_{k},\a_{k}\rangle_{L^{2}(X)}=\sum_{p+q=k}\langle\De_{d}\a_{p,q},\a_{p,q}\rangle_{L^{2}(X)}\Rightarrow \mathcal{H}^{k}_{(2)}(X)=\bigoplus_{p+q=k}\mathcal{H}^{p,q}_{(2);J}(X).$$	
Following Theorem \ref{T1}, we get some well-known results proved by Gromov \cite{Gro}, Cao-Xavier \cite{CX} and Jost-Zuo\cite{JZ}.
	\begin{corollary}[=Corollary \ref{C2} and \ref{C1}]
		Let $(X^{2n},\w)$ be a complete K\"{a}hler  manifold with a $d$(sublinear) K\"{a}hler form $\w$.  Then 
		$$\mathcal{H}_{(2)}^{k}(X)=\{0\}$$
		unless $k=n$. In particular, if $\w$ is $d$(bounded), i.e., there exists a bounded $1$-form $\theta$ such that $\w=d\theta$, then any $\a\in \Om^{k}_{0}$ on $X$ of degree $k\neq n$ satisfies the inequality
		\begin{equation*}
		c_{n,k}\|\theta\|^{-2}_{L_{\infty}}\|\a\|^{2}_{L^{2}(X)}\leq\|d\a\|^{2}_{L^{2}(X)}+\|d^{\ast}\a\|^{2}_{L^{2}(X)},
		\end{equation*}
		where $c_{n,k}>0$ is a constant which depends only on $n,k$.
	\end{corollary}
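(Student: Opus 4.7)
The plan is to derive the corollary directly from Theorem \ref{T1} by exploiting the bidegree compatibility available in the integrable (K\"ahler) setting. The crucial ingredient, already recorded in the paragraph immediately preceding the corollary, is that on a K\"ahler manifold the Hodge Laplacian $\De_{d}=2\De_{\bar\pa}$ commutes with projection onto bidegree components, so that for every $k$-form $\a=\sum_{p+q=k}\a_{p,q}$ one has
\begin{equation*}
\langle\De_{d}\a,\a\rangle_{L^{2}(X)}=\sum_{p+q=k}\langle\De_{d}\a_{p,q},\a_{p,q}\rangle_{L^{2}(X)},
\end{equation*}
and consequently $\mathcal{H}^{k}_{(2)}(X)=\bigoplus_{p+q=k}\mathcal{H}^{p,q}_{(2);J}(X)$.

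First, viewing the given K\"ahler structure as a particular instance of a complete almost K\"ahler structure with $d$(sublinear) symplectic form, I would apply the vanishing half of Theorem \ref{T1} to obtain $\mathcal{H}^{p,q}_{(2);J}(X)=\{0\}$ for every $(p,q)$ with $p+q\neq n$. Summing over $p+q=k$ in the bidegree decomposition above yields $\mathcal{H}^{k}_{(2)}(X)=\{0\}$ for every $k\neq n$.

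For the spectral gap assertion in the $d$(bounded) case, I would decompose a test form $\a\in\Om^{k}_{0}(X)$ with $k\neq n$ by bidegree, $\a=\sum_{p+q=k}\a_{p,q}$ with each $\a_{p,q}\in\Om^{p,q}_{J}\cap\Om^{k}_{0}$ (bidegree projection preserves compact support). Applying the quantitative inequality from Theorem \ref{T1} componentwise produces
\begin{equation*}
c_{n,k}^{(p,q)}\|\theta\|_{L_{\infty}}^{-2}\|\a_{p,q}\|_{L^{2}(X)}^{2}\leq\|d\a_{p,q}\|_{L^{2}(X)}^{2}+\|d^{\ast}\a_{p,q}\|_{L^{2}(X)}^{2}
\end{equation*}
for positive constants $c_{n,k}^{(p,q)}$. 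Summing over $(p,q)$ and using bidegree orthogonality together with the identity displayed above -- which gives $\|d\a\|_{L^{2}}^{2}+\|d^{\ast}\a\|_{L^{2}}^{2}=\sum_{p+q=k}(\|d\a_{p,q}\|_{L^{2}}^{2}+\|d^{\ast}\a_{p,q}\|_{L^{2}}^{2})$ -- one recovers the stated inequality with the constant $c_{n,k}:=\min_{p+q=k}c_{n,k}^{(p,q)}$, which still depends only on $n$ and $k$.

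The genuine content is entirely concentrated in Theorem \ref{T1}, and the corollary is essentially a book-keeping step. The K\"ahler hypothesis enters only through the fact that $\De_{d}$ preserves bidegree, which is what makes both the orthogonal splitting of $\mathcal{H}^{k}_{(2)}$ and the compatibility of the quadratic form $\langle\De_{d}\cdot,\cdot\rangle$ with the bidegree decomposition automatic. In the merely almost K\"ahler situation this step would fail, which is precisely why Theorem \ref{T1} is phrased in terms of the full bidegree spaces $\mathcal{H}^{p,q}_{(2);J}$ rather than the total harmonic spaces $\mathcal{H}^{k}_{(2)}$; thus the only potential obstacle, verifying the bidegree splitting of the Dirichlet energy, is resolved before the corollary is even stated.
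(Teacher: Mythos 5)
Your proposal is correct and follows essentially the same route as the paper: the paper's proofs of Corollaries \ref{C2} and \ref{C1} likewise invoke the K\"ahler identity $\De_{d}=2\De_{\bar\pa}$ (Lemma \ref{L2}) to split both $\mathcal{H}^{k}_{(2)}(X)$ and the Dirichlet energy $\|d\a\|^{2}+\|d^{\ast}\a\|^{2}$ by bidegree, then apply the $(p,q)$-level vanishing and spectral-gap statements of Theorems \ref{T4} and \ref{T3} componentwise, taking the minimum of the constants $c_{n,p,q}$ exactly as you do.
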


	\section{$L^{2}$-Hodge theory}
	We recall some basic on $L^{2}$ harmonic forms \cite{Car,Dodziuk}.  Let $X$ be a smooth manifold of dimension $n$, let $\Om^{k}(X)$ and $\Om^{k}_{0}(X)$ denote the smooth $k$-forms on $X$ and the smooth $k$-forms with compact support on $X$, respectively. We assume now that $X$ is endowed with a Riemannian metric $g$. Let $(,)$ denote the pointwise inner product on $\Om^{k}(X)$ given by $g$.  The global inner product is  defined
	$$\langle\a,\b\rangle=\int_{X}(\a,\b)dvol,$$
	where $dvol$ is the Riemannian volume form of metric $g$.
	
	 We also write $|\a|^{2}=(\a,\a)$, $\|\a\|^{2}=\int_{X}|\a|^{2}dvol$, and let $$\Om^{k}_{(2)}(X)=\{\a\in\Om^{k}(X):\|\a\|^{2}<\infty\}.$$
	Denote by $(A_{(2)}^{k}(X),d)$ the sub-complex of $(\Om^{k}(X),d)$ formed by differential forms $\a$ such that both $\a$ and $d\a$ are in $L^{2}$. Then the reduced $L^{2}$-cohomology group of degree $k$ of $X$ is defined as
	$$H^{k}_{(2)}(X)=A^{k}_{(2)}(X)\cap\ker d/\overline{(d\Om^{k-1}_{(2)}(X))}.$$
	We recall the following 
	\begin{lemma}(\cite[Lemma 1.1 A]{Gro})\label{L9}
		Let $(X,g)$ be a complete Riemannian manifold of dimension $n$ and let $\eta$ be an $L^{1}$-form on $X$ of degree $n-1$, that is
		$$\int_{X}|\eta|<\infty.$$
		Assume that also the differential $d\eta$ is also $L^{1}$. Then
		$$\int_{X}d\eta=0.$$ 
	\end{lemma}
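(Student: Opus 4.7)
The plan is to reduce the claim to the classical Stokes theorem for compactly supported forms, by exhausting $X$ with a sequence of smooth cut-offs. Fix a base point $x_{0}\in X$ and set $\rho(x):=\rho_{g}(x,x_{0})$. Completeness of $(X,g)$ guarantees that the closed metric balls $\overline{B(x_{0},R)}$ are compact, so one can construct a sequence $\{\phi_{j}\}_{j\in\N}$ of smooth compactly supported functions with $0\le\phi_{j}\le 1$, $\phi_{j}\equiv 1$ on $B(x_{0},j)$, $\mathrm{supp}(\phi_{j})\subset B(x_{0},2j)$, and
$$|d\phi_{j}|\le\frac{C}{j},\qquad \mathrm{supp}(d\phi_{j})\subset B(x_{0},2j)\setminus B(x_{0},j),$$
where $C>0$ is independent of $j$. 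The concrete model is $\phi_{j}(x)=\chi(\rho(x)/j)$ for a fixed smooth $\chi:[0,\infty)\to[0,1]$ equal to $1$ on $[0,1]$ and vanishing outside $[0,2]$, smoothing $\rho$ off $x_{0}$ if one wants genuine smoothness.

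With these cut-offs in hand, each $\phi_{j}\eta$ is a compactly supported $(n-1)$-form on $X$, so the classical Stokes theorem yields
$$0=\int_{X}d(\phi_{j}\eta)=\int_{X}d\phi_{j}\wedge\eta+\int_{X}\phi_{j}\,d\eta,$$
and therefore
$$\int_{X}\phi_{j}\,d\eta=-\int_{X}d\phi_{j}\wedge\eta.$$

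The conclusion will follow by letting $j\to\infty$ on both sides. Because $\phi_{j}\to 1$ pointwise and $|\phi_{j}\,d\eta|\le|d\eta|\in L^{1}(X)$, dominated convergence shows that the left-hand side tends to $\int_{X}d\eta$. For the right-hand side I would use the support condition on $d\phi_{j}$ together with the uniform bound to estimate
$$\Big|\int_{X}d\phi_{j}\wedge\eta\Big|\le\frac{C}{j}\int_{B(x_{0},2j)\setminus B(x_{0},j)}|\eta|\,dvol\le\frac{C}{j}\|\eta\|_{L^{1}(X)}\longrightarrow 0,$$
from which $\int_{X}d\eta=0$.

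The only substantive technical point is the construction of the exhaustion $\{\phi_{j}\}$ with differential bounded independently of $j$: this is precisely where completeness of $(X,g)$ enters in an essential way, since on an incomplete manifold $L^{1}$ forms can fail the conclusion (the standard $dx^{1}\wedge\cdots\wedge dx^{n-1}$ near a missing boundary being the model example). Once the exhaustion is in hand, the remainder is Stokes on compact sets plus two applications of dominated convergence, with no further analytic subtleties.
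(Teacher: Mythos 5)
Your argument is correct and is exactly the standard cut-off-plus-Stokes proof from Gromov's paper; the paper itself states this lemma with a citation to \cite[Lemma 1.1 A]{Gro} and gives no proof of its own, and the same family of cut-off functions with $|d\phi_{j}|\lesssim 1/j$ reappears later in the paper's proof of Lemma 3.3. The one point worth keeping explicit, which you already flag, is that $\rho$ is only Lipschitz and must be smoothed before $\phi_{j}=\chi(\rho/j)$ is genuinely smooth; with that, the Hopf--Rinow compactness of closed balls and the two dominated-convergence passages close the argument.
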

	Let $d^{\ast}$ denote the adjoint operator of the differential operator $d$ with respect to $g$. The Laplacian operator is given by $\De_{d}=dd^{\ast}+d^{\ast}d:\Om^{k}(X)\rightarrow\Om^{k}(X)$. A $k$-form $\a\in\Om^{k}_{(2)}(X)$ is called $L^{2}$-harmonic form if $\De_{d}\a=0$. It is well known that $\a$ is $L^{2}$-harmonic if only if $d\a=0$ and $d^{\ast}\a=0$. We denote by 
	$$\mathcal{H}^{k}_{(2)}(X)=\{\a\in\Om^{k}_{(2)}(X):\De_{d}\a=0\}$$ 
	the space of $L^{2}$-harmonic $k$-forms on $X$. We have the Hodge-de Rham-Kodaira orthogonal decomposition of $\Om^{k}_{(2)}(X)$
	$$\Om^{k}_{(2)}(X)=\mathcal{H}^{k}_{(2)}(X)\oplus\overline{d(\Om^{k-1}(X))}\oplus\overline{d^{\ast}(\Om^{k+1}(X))},$$
	where $\overline{d(\Om^{k-1}(X))}$ and $\overline{d^{\ast}(\Om^{k+1}(X))}$ are closure of $d(\Om^{k-1}(X))$ and $d^{\ast}(\Om^{k+1}(X))$ with respect to $L^{2}$-norm respectively (see \cite{Car} and \cite[1.1.C.]{Gro}).
	We have the following 
	\begin{lemma}(\cite[Lemma 2.2]{HT})\label{L6}
		Let $(X,g)$ be a complete Riemannian manifold of dimension $n$ and let $\a\in\Om^{2}_{(2)}(X)$. Denote by
		$$\a=\a_{H}+\la+\mu$$
		the Hodge decomposition of $\a$, where $\a_{H}\in\mathcal{H}^{k}_{(2)}$, $\la\in\overline{d\Om^{k-1}(X)}$, $\mu\in\overline{d^{\ast}\Om^{k+1}(X)}$. Then\\
		(1) $d\la=0$,\\
		(2) If $d\a=0$, then $\mu=0$.
	\end{lemma}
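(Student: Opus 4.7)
The plan is to use the fact that on a complete Riemannian manifold the exterior derivative $d$ is a closed (indeed closable) operator on $L^{2}$, so that limits in $L^{2}$ commute well with $d$ applied to sequences already known to be in $\ker d$. Both parts of the lemma will reduce to this observation plus the orthogonality of the three summands in the Hodge-de Rham-Kodaira decomposition displayed just above.

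For part (1), I would pick a sequence $\b_{n}\in\Om^{k-1}(X)$ (with $d\b_{n}\in L^{2}$) such that $d\b_{n}\to\la$ in $L^{2}$. Since $d(d\b_{n})=0$ for every $n$, the sequence $(d\b_{n},\,d(d\b_{n}))=(d\b_{n},0)$ lies in the graph of the $L^{2}$-closure of $d$ acting on $k$-forms. Completeness of $(X,g)$ ensures that this operator is closed, hence $\la$ lies in its domain and $d\la=\lim_{n}d(d\b_{n})=0$. This is the content of Gaffney-type arguments and is essentially the ``closedness of closures'' principle.

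For part (2), assume $d\a=0$. The harmonic component satisfies $d\a_{H}=0$ (since $\De_{d}\a_{H}=0$ on a complete manifold forces both $d\a_{H}=0$ and $d^{\ast}\a_{H}=0$), and $d\la=0$ by (1). Consequently $d\mu=0$. Now write $\mu=\lim_{n}d^{\ast}\gamma_{n}$ in $L^{2}$ with $\gamma_{n}\in\Om^{k+1}(X)$. The key step is the identity
\[
\langle \mu,d^{\ast}\gamma_{n}\rangle=\langle d\mu,\gamma_{n}\rangle=0,
\]
which is justified because on a complete manifold $d$ and $d^{\ast}$ are formal adjoints on $L^{2}$ in the strong sense (Gaffney's theorem), and $\mu$ is already known to lie in the $L^{2}$-domain of $d$ with $d\mu=0$. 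Passing to the limit yields
\[
\|\mu\|^{2}=\lim_{n}\langle \mu,d^{\ast}\gamma_{n}\rangle=0,
\]
so $\mu=0$.

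The main obstacle is the integration-by-parts step in (2): a priori the approximating forms $\gamma_{n}$ need not have compact support nor lie in $L^{2}$ themselves, so the pairing $\langle d\mu,\gamma_{n}\rangle$ must be interpreted via the closed extensions rather than by a naive Stokes-type argument. This is exactly where completeness enters, via Gaffney's result that $(d,d^{\ast})$ form a mutually adjoint pair on the natural $L^{2}$-domains; if one instead defines the closures using compactly supported test forms (the convention adopted in Gromov's \cite{Gro}), then the pairing is automatic and no further justification is needed. Once that step is accepted, both parts are immediate from the orthogonality of the Hodge decomposition.
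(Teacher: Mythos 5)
Your proposal is correct: part (1) is the closedness of the (maximal/distributional) extension of $d$ applied to the approximating sequence $d\b_{n}$, and part (2) follows by showing $d\mu=0$ and then pairing $\mu$ against $d^{\ast}\gamma_{n}$, with Gaffney's theorem (or the compactly supported convention) justifying the integration by parts. The paper itself gives no proof, citing \cite[Lemma 2.2]{HT}, and your argument is essentially the same standard one used there, so nothing further is needed.
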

	
	\section{Vanishing theorems}
	As we derive estimates in this section, there will be many constants which appear. Sometimes we will take care to bound the size of these constants, but we will also use the following notation whenever the value of the constants are unimportant. We write $\a\lesssim\b$ to mean that $\a\leq C\b$ for some positive constant $C$ independent of certain parameters on which $\a$ and $\b$ depend. The parameters on which $C$ is independent will be clear or specified at each occurrence. We also use $\b\lesssim\a$ and $\a\approx\b$ analogously.
	\subsection{$L^{2}$-harmonic forms of bi-degree}
	We review and point out certain special structures of differential forms on symplectic manifolds. Let $(X,\w)$ be a closed symplectic manifold of dimension $2n$. Using the symplectic form $\w=\sum\frac{1}{2}\w_{ij}dx^{i}\wedge dx^{j}$, the
	Lefschetz operator $L:\Om^{k}\rightarrow\Om^{k+1}$ and the dual Lefschetz operator $\La:\Om^{k}\rightarrow\Om^{k-2}$ are defined acting on a $k$-form $\a_{k}$ by
	\begin{equation*}
	L(\a_{k})=\w\wedge\a_{k},\ \ \La(\a_{k})=\frac{1}{2}(\w^{-1})^{ij}i_{\pa_{x_{i}}}i_{\pa_{x_{j}}}\a_{k}.
	\end{equation*}
	\begin{definition}\label{L1}(\cite{Tseng-Yau} Definition 2.1)
		A differential $k$-from $B_{k}$ with $k\leq n$ is called $primitive$, i.e., $B_{k}\in P^{k}(X)$, if it satisfies the two equivalent conditions: (i) $\La B_{k}=0$; (ii) $L^{n-k+1}B_{k}=0$.
	\end{definition}
	We will make use of the Weil relation for primitive $k$-forms $B_{k}$, see \cite{Tseng-Yau} Equation (2.19):
	\begin{equation}\label{E5}
	\ast\frac{1}{r!}L^{r}B_{k}=(-1)^{\frac{k(k+1)}{2}}\frac{1}{(n-k-r)!}L^{n-k-r}\mathcal{J}(B_{k}),
	\end{equation}
	where $$\mathcal{J}=\sum_{p,q}(\sqrt{-1})^{p-q}\Pi^{p,q}$$
	projects a $k$-form onto its $(p,q)$ parts time the multiplicative factor $(\sqrt{-1})^{p-q}$. 
	
	Recall the symplectic star operator, $\ast_{s}:\Om^{k}(X)\rightarrow\Om^{2n-k}(X)$ defined by
	\begin{equation}\label{E11}
	\begin{split}
	\a\wedge\ast_{s}\b&=(\w)^{-1}(\a,\b)dvol\\
	&=\frac{1}{k!}(\w^{-1})^{i_{1}j_{1}}(\w^{-1})^{i_{2}j_{2}}\cdots(\w^{-1})^{i_{k}j_{k}}\a_{i_{1}i_{2}\cdots i_{k}}\b_{j_{1}j_{2}\cdots j_{k}}\frac{\w^{n}}{n!},\\
	\end{split}
	\end{equation}
	for any two $k$-forms $\a,\b\in\Om^{k}$. This definition is in direct analogy with the Riemannian Hodge star operator where here $\w^{-1}$ has replaced $g^{-1}$. Notice, however, that $\ast_{s}$ as defined in (\ref{E11}) does not give a positive-definite local inner product, as $\a\wedge\ast_{s}\a$ is $k$-symmetric. Thus, for instance, $\a_{k}\wedge\ast_{s}\a_{k}$ for $k$ odd \cite{Tseng-Yau}. The symplectic star operator permits us to consider $\La$ and $d^{\La}$ as the symplectic adjoints of $L$ and $d$, respectively. Specifically, we have the relations \cite{Mat,Yan}  $\La=\ast_{s}L\ast_{s}$,
	and \cite{Bry}
	\begin{equation}\label{E12}
	d^{\La}:=[d,\La]=(-1)^{k+1}\ast_{s}d\ast_{s},
	\end{equation}
	acting on $\a_{k}\in\Om^{k}$. Thus we easily find that $d^{\La}$ squares to zero, that is, $d^{\La}d^{\La}=-\ast_{s}d^{2}\ast_{s}=0$.
	
	Let $(X,\w,g)$ be a closed almost K\"{a}hler manifold. We can used the metric $g$ to define the Hodge star operator. The dual Lefschetz operator $\La$ is then just the adjoint of $L$, $\La=(-1)^{k}\ast L\ast$. The $d^{\La}$ operator is related via the Hodge star operator defined with respect to the compatible metric $g$ by the relation, see \cite[Lemma 2.9]{Tseng-Yau},
	$$d^{\La}=(-1)^{k+1}\ast\mathcal{J}^{-1}d\ast\mathcal{J}^{-1}=-{\ast}\mathcal{J}^{-1}d\mathcal{J}\ast.$$
	Let $X$ be a $2n$-dimensional manifold (without boundary) and $J$ be a smooth almost-complex structure on $X$. There is a natural action of $J$ on the space $\Om^{k}(X,\C):=\Om^{k}(X)\otimes\C$, which induces a topological type decomoposition
	$$\Om^{k}(X,\C)=\bigoplus_{p+q=k}\Om^{p,q}_{J}(X,\C),$$
	where  $\Om^{p,q}_{J}(X,\C)$ denotes the space of complex forms of type $(p,q)$ with respect to $J$ \cite{HMT}. If $k$ is even, $J$ also acts on $\Om^{k}(X)$ as an involution. The space  $\Om^{k}(X)$ of real smooth differential $k$-forms has a type decomposition:
	$$\Om^{k}(X)=\bigoplus_{p+q=k}\Om^{p,q}_{J}(X),$$
	where 
	$$\Om^{p,q}_{J}(X)=\{\a\in\Om^{p,q}_{J}(X,\C)\oplus\Om^{q,p}_{J}(X,\C):\a=\bar{\a}\}.$$
	
	We denote by 
	$$\mathcal{H}^{p,q}_{(2);J}(X):=\{\a\in\Om^{p,q}_{(2);J}(X):\De_{d}\a=0 \} $$
	the space of $L^{2}$-harmonic forms of bi-degree $(p,q)$. Here $$\Om^{p,q}_{(2);J}(X):=\{\a\in\Om^{p,q}_{J}(X):\|\a\|_{L^{2}(X)}<\infty\}.$$

	In a closed symplectic manifold $(X,\w)$, Tseng-Yau considered the symplectic cohomology group  $H^{k}_{d+d^{\La}}$ which are just the symplectic version of well-known cohomologies in complex geometry already studied by Kodaira-Spencer \cite{KS}. They also defined a four-order differential operator as follows
	$$\De_{d+d^{\La}}=dd^{\La}(dd^{\La})^{\ast}+\la(d^{\ast}d+d^{\La\ast}d^{\La}),$$
	where $d^{\La_{\ast}}=([d,\La])^{\ast}=[L,d^{\ast}]=\ast d^{\La}\ast$ (see \cite[Euqation (2.25)]{Tseng-Yau}).  In \cite{Tseng-Yau}, the authors proved that there exists a Lefschetz decomposition for symplectic cohomology group  $H^{k}_{d+d^{\La}}$.
	
	In a complete symplectic manifold $(X,\w)$, we only consider a two-order differential operator as follows
	$$\D_{d+d^{\La}}=d^{\ast}d+d^{\La\ast}d^{\La}.$$
	We denote by 
	$$\mathcal{H}^{p,q}_{(2);d+d^{\La}}:=\{\a\in\Om^{p,q}_{(2);J}:\D_{d+d^{\La}}\a=0 \}$$
	the space of $L^{2}$ $d+d^{\La}$-harmonic $(p,q)$-forms. We also denote by $$P\mathcal{H}^{p,q}_{(2);d+d^{\La}}:=\{\a\in P^{p,q}_{(2);J}:\D_{d+d^{\La}}\a=0  \}$$
	the space of $L^{2}$ $d+d^{\La}$-harmonic $(p,q)$-forms,
where $P^{p,q}_{(2);J}:=\ker\La\cap\Om^{p,q}_{(2);J}$ is the space of the primitive $L^{2}$ $(p,q)$-forms.
	\begin{lemma}\label{L8}
	For any $\a_{p,q}\in\Om^{p,q}_{J}$, we have the identity
	$$\|d^{\La}\a_{p,q}\|^{2}=\|d^{\ast}\a_{p,q}\|^{2}.$$
	In particular,
	$$\mathcal{H}^{p,q}_{(2);J}(X)=\mathcal{H}^{p,q}_{(2);d+d^{\La}}(X).$$
\end{lemma}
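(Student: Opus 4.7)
The plan is to combine the formula $d^{\La}=-\ast\mathcal{J}^{-1}d\mathcal{J}\ast$ (recalled just above the lemma from Tseng--Yau) with the standard Hodge identity $d^{\ast}=-\ast d\ast$ on the $2n$-dimensional oriented Riemannian manifold $(X,g)$, and to exploit that both $\ast$ and $\mathcal{J}$ are pointwise (hence $L^{2}$) isometries of the metric on forms: $\ast$ is an orthogonal bundle map, while $\mathcal{J}$ multiplies each bidegree $(r,s)$-component by the unit complex scalar $i^{r-s}$, and distinct bidegree components are pointwise orthogonal under the Hermitian extension of $g$. Consequently $\ast\mathcal{J}^{-1}$ is an $L^{2}$-isometry on forms.

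The main computation is then a direct chain of equalities. Since $\ast\a_{p,q}$ has bidegree $(n-q,n-p)$, we have $\mathcal{J}\ast\a_{p,q}=i^{p-q}\ast\a_{p,q}$, a unit scalar multiple. Therefore
\[
\|d^{\La}\a_{p,q}\|^{2}=\|\ast\mathcal{J}^{-1}d\mathcal{J}\ast\a_{p,q}\|^{2}=\|d\mathcal{J}\ast\a_{p,q}\|^{2}=|i^{p-q}|^{2}\|d\ast\a_{p,q}\|^{2}=\|d\ast\a_{p,q}\|^{2},
\]
while $\|d^{\ast}\a_{p,q}\|^{2}=\|\ast d\ast\a_{p,q}\|^{2}=\|d\ast\a_{p,q}\|^{2}$. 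Comparing the two sides yields the claimed identity.

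For the ``in particular'' statement, I would invoke the standard fact (noted in Section~2) that on a complete Riemannian manifold an $L^{2}$-form $\a$ satisfies $\De_{d}\a=0$ iff $d\a=0=d^{\ast}\a$, and similarly satisfies $\D_{d+d^{\La}}\a=0$ iff $d\a=0=d^{\La}\a$ (pair $\D_{d+d^{\La}}\a$ with $\a$ in $L^{2}$ and use $\langle\D_{d+d^{\La}}\a,\a\rangle=\|d\a\|^{2}+\|d^{\La}\a\|^{2}$). Combining with the identity just proved, which gives $d^{\ast}\a=0\Leftrightarrow d^{\La}\a=0$ on a bidegree-$(p,q)$ form, yields $\mathcal{H}^{p,q}_{(2);J}(X)=\mathcal{H}^{p,q}_{(2);d+d^{\La}}(X)$.

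The principal subtlety is the almost-K\"{a}hler (non-integrable) setting: $d$ splits as $d=\mu+\pa+\bar{\pa}+\bar{\mu}$ with the four pieces shifting bidegree by $(+2,-1),(+1,0),(0,+1),(-1,+2)$, so $\mathcal{J}$ does \emph{not} commute with $d$ in general, and $d\mathcal{J}\ast\a_{p,q}$ carries components in four distinct bidegrees. The argument is saved by noting that this non-commutation only introduces unit complex phases on each bidegree piece, which are absorbed by the outer $L^{2}$-isometry $\mathcal{J}^{-1}$ via bidegree orthogonality; no further integrability hypothesis on $J$ is needed at this step.
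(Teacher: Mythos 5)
Your proposal is correct and follows essentially the same route as the paper: both reduce the claim to the chain $\|d^{\La}\a_{p,q}\|=\|d\ast\a_{p,q}\|=\|d^{\ast}\a_{p,q}\|$ using the Tseng--Yau formula for $d^{\La}$ (you take the version $-\ast\mathcal{J}^{-1}d\mathcal{J}\ast$, the paper the equivalent $(-1)^{k+1}\ast\mathcal{J}^{-1}d\ast\mathcal{J}^{-1}$), the fact that $\ast$ and $\mathcal{J}$ are $L^{2}$-isometries acting by unit scalars on pure bidegrees, and Lemma~\ref{L3} together with the characterization of $\De_{d}$-harmonicity for the ``in particular'' part. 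Your explicit remark that the non-commutation of $\mathcal{J}$ with $d$ in the non-integrable case is harmless because the four bidegree components of $d$ are pointwise orthogonal is a point the paper uses only implicitly, so no gap either way.
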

\begin{proof}
	Noting that $\mathcal{J}^{2}=(-1)^{k}$ acting on a $k$-form. We then have $$d\ast\mathcal{J}^{-1}\a_{p,q}=d\ast(-1)^{k}\mathcal{J}\a_{p,q}=(-1)^{k}(\sqrt{-1})^{p-q}d\ast\a_{p,q}.$$
	Therefore,
	\begin{equation*}
	\|d^{\La}\a_{p,q}\|^{2}=\|\mathcal{J}^{-1}d\ast\mathcal{J}^{-1}\a_{p,q}\|^{2}=\|d\ast\mathcal{J}^{-1}\a_{p,q}\|^{2}=\|d\ast\a_{p,q}\|^{2}=\|d^{\ast}\a_{p,q}\|^{2}.
	\end{equation*}  
	Therefore, $d^{\La}\a_{p,q}=0$ if only if $d^{\ast}\a_{p,q}=0$. Suppose that $\a_{p,q}\in\mathcal{H}^{p,q}_{(2);d+d^{\La}}(X)$. Then following Lemma \ref{L3}, we get $d^{\La}\a_{p,q}=0$ and $d\a_{p,q}=0$. Hence $d^{\ast}\a_{p,q}=0$, i.e., $\a_{p,q}\in\mathcal{H}^{p,q}_{(2);J}(X)$.
\end{proof}	
	We follow the method of Gromov's \cite{Gro} to choose a sequence of cutoff functions $\{f_{\varepsilon}\}$ satisfying the following conditions:\\
	(i) $f_{\varepsilon}$ is smooth and takes values in the interval $[0,1]$, furthermore, $f_{\varepsilon}$ has compact support.\\
	(ii) The subsets $f^{-1}_{\varepsilon}\subset X$, i.e., of the points $x\in X$ where $f_{\varepsilon}(x)=1$ exhaust $X$ as $\varepsilon\rightarrow0$.\\
	(iii) The differential of $f_{\varepsilon}$ everywhere bounded by $\varepsilon$,
	$$\|df_{\varepsilon}\|_{L_{\infty}}=\sup_{x\in X}|df_{\varepsilon}|\leq\varepsilon.$$
	Thus one obtains another useful
	\begin{lemma}\label{L3}
		If an $L^{2}$ $(p,q)$-form $\a$ is $\mathcal{D}_{d+d^{\La}}$-harmonic form, then $d\a=0$, $d^{\La}\a=0$.
	\end{lemma}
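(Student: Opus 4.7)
The plan is to run the standard cutoff-and-absorb argument that turns a $\D_{d+d^{\La}}$-harmonic equation into the two vanishings $d\a=0$ and $d^{\La}\a=0$, using the fact that $\a$ is $L^{2}$ and the sequence $\{f_{\varepsilon}\}$ introduced just above the statement to substitute for compact support.

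First, since $f_{\varepsilon}^{2}\a$ has compact support and $\a$ is a smooth solution of the elliptic-in-nature equation $\D_{d+d^{\La}}\a=0$, I may pair the equation against $f_{\varepsilon}^{2}\a$ and integrate by parts freely. This gives
\begin{equation*}
0=\langle\D_{d+d^{\La}}\a,\,f_{\varepsilon}^{2}\a\rangle=\langle d\a,\,d(f_{\varepsilon}^{2}\a)\rangle+\langle d^{\La}\a,\,d^{\La}(f_{\varepsilon}^{2}\a)\rangle.
\end{equation*}
For the $d$-term I use the Leibniz rule $d(f_{\varepsilon}^{2}\a)=f_{\varepsilon}^{2}d\a+2f_{\varepsilon}df_{\varepsilon}\wedge\a$. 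For the $d^{\La}$-term I exploit $d^{\La}=[d,\La]$ together with the $C^{\infty}(X)$-linearity of $\La$: a direct computation yields
\begin{equation*}
d^{\La}(f_{\varepsilon}^{2}\a)=f_{\varepsilon}^{2}\,d^{\La}\a+2f_{\varepsilon}\bigl(df_{\varepsilon}\wedge\La\a-\La(df_{\varepsilon}\wedge\a)\bigr),
\end{equation*}
where the bracketed commutator $R(df_{\varepsilon},\a):=df_{\varepsilon}\wedge\La\a-\La(df_{\varepsilon}\wedge\a)$ is a zeroth-order algebraic expression in $\a$, linear in $df_{\varepsilon}$, with pointwise estimate $|R(df_{\varepsilon},\a)|\lesssim |df_{\varepsilon}|\,|\a|$ (the implicit constant depending only on $n$ and the degree).

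Substituting these identities into the previous display and moving the good terms to the left gives
\begin{equation*}
\|f_{\varepsilon}\,d\a\|^{2}+\|f_{\varepsilon}\,d^{\La}\a\|^{2}=-2\langle f_{\varepsilon}\,d\a,\,df_{\varepsilon}\wedge\a\rangle-2\langle f_{\varepsilon}\,d^{\La}\a,\,R(df_{\varepsilon},\a)\rangle.
\end{equation*}
Applying Cauchy--Schwarz with a weight $\tfrac{1}{2}$ to each cross term absorbs half of $\|f_{\varepsilon}d\a\|^{2}+\|f_{\varepsilon}d^{\La}\a\|^{2}$ into the right-hand side and leaves
\begin{equation*}
\|f_{\varepsilon}\,d\a\|^{2}+\|f_{\varepsilon}\,d^{\La}\a\|^{2}\lesssim \|df_{\varepsilon}\|_{L_{\infty}}^{2}\,\|\a\|^{2}\leq C\varepsilon^{2}\|\a\|^{2}.
\end{equation*}
Since $\a\in L^{2}$ and $f_{\varepsilon}\to 1$ pointwise with $0\leq f_{\varepsilon}\leq 1$, monotone convergence on the left and $\varepsilon\to 0$ on the right force $\|d\a\|^{2}+\|d^{\La}\a\|^{2}=0$, which is the desired conclusion.

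The only non-routine step I anticipate is verifying that the commutator $[d^{\La},f_{\varepsilon}^{2}]$ is indeed zeroth-order with the claimed $|df_{\varepsilon}|$ bound; once that is in hand, the whole argument collapses to the same Cauchy--Schwarz/cutoff scheme used in Gromov's treatment for the ordinary Laplacian, and completeness of $X$ enters only through the existence of the $\{f_{\varepsilon}\}$ with $\|df_{\varepsilon}\|_{L_{\infty}}\to 0$ and $f_{\varepsilon}\to 1$.
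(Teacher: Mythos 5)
Your proof is correct and follows essentially the same route as the paper's: pair $\D_{d+d^{\La}}\a=0$ against $f_{\varepsilon}^{2}\a$, expand $d(f_{\varepsilon}^{2}\a)$ and $d^{\La}(f_{\varepsilon}^{2}\a)$ by the Leibniz rule (using that $\La$ is zeroth order), bound the cross terms by $\varepsilon\|\a\|_{L^{2}}$, and let $\varepsilon\to0$. The only cosmetic difference is that you absorb the cross terms via weighted Cauchy--Schwarz, whereas the paper uses the auxiliary condition $|df_{\varepsilon}|^{2}<\varepsilon f_{\varepsilon}$ before applying the Schwarz inequality; both yield the same conclusion.
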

	\begin{proof}
		We want to justify the integral identity
		$$\langle\D_{d+d^{\La}}\a,\a\rangle=\langle d\a,d\a\rangle+\langle d^{\La}\a,d^{\La}\a\rangle$$
		If $d\a$ and $d^{\La}\a$  are $L^{2}$ (i.e., square integrable on $X$), then this follows by Lemma \ref{L9}. To handle the general case we cutoff $\a$ and obtain by a simple computation
		\begin{equation*}
		\begin{split}
		0&=\langle\D_{d+d^{\La}}\a,f^{2}_{\varepsilon}\a\rangle\\
		&=\langle d\a,d(f^{2}_{\varepsilon}\a)\rangle+\langle d^{\La}\a,d^{\La}(f^{2}_{\varepsilon}\a)\rangle\\
		&=\langle d\a,f^{2}_{\varepsilon}d\a\rangle+\langle d\a,2f_{\varepsilon}df_{\varepsilon}\wedge\a\rangle\\
		&+\langle d^{\La}\a,f^{2}_{\varepsilon}d^{\La}\a\rangle+\langle d^{\La}\a,2f_{\varepsilon}df_{\varepsilon}\wedge(\La\a)\rangle-\langle d^{\La}\a,\La(2f_{\varepsilon}df_{\varepsilon}\wedge\a)\rangle\\
		&=I_{1}(\varepsilon)+I_{2}(\varepsilon),\\
		\end{split}
		\end{equation*}
		where
		\begin{equation*}
		\begin{split}
		|I_{1}(\varepsilon)|&=\langle d\a,f^{2}_{\varepsilon}d\a\rangle+\langle d^{\La}\a,f^{2}_{\varepsilon}d^{\La}\a\rangle\\
		&=\int_{X}f^{2}_{\varepsilon}(|d\a|^{2}+|d^{\La}\a|^{2})\\
		\end{split}
		\end{equation*}
		and
		\begin{equation*}
		\begin{split}
		|I_{2}(\varepsilon)|&\leq|\langle d\a,2f_{\varepsilon}df_{\varepsilon}\wedge\a\rangle|+|\langle d^{\La}\a,2f_{\varepsilon}df_{\varepsilon}\wedge(\La\a)\rangle|+|\langle d^{\La}\a,\La(2f_{\varepsilon}df_{\varepsilon}\wedge\a)\rangle|\\
		&\lesssim\int_{X}|df_{\varepsilon}|\cdot|f_{\varepsilon}|\cdot|\a|(|d\a|+|d^{\La}\a|).\\
		\end{split}
		\end{equation*}
		Then we choose $f_{\varepsilon}$ such that $|df_{\varepsilon}|^{2}<\varepsilon f_{\varepsilon}$ on $X$ and estimate $I_{2}$ by Schwartz inequality. Then
		$$|I_{2}(\varepsilon)|\lesssim \varepsilon\|f_{\varepsilon}\a\|_{L^{2}(X)}\big{(}\int_{X}f^{2}_{\varepsilon}(|d\a|^{2}+|d^{\La}\a|^{2})\big{)}^{\frac{1}{2}},$$
		and hence $|I_{1}|\rightarrow0$ for $\varepsilon\rightarrow0$.
	\end{proof}
	\begin{proposition}\label{P7}
		Let $(X^{2n},J,\w)$ be a complete almost K\"{a}hler  manifold. We then have a decomposition for the space of the $L^{2}$-harmonic forms of bi-degree $(p,q)$:
		$$\mathcal{H}^{p,q}_{(2);J}(X)=\bigoplus_{r\geq\max\{0,k-n\}} L^{r}P\mathcal{H}^{p-r,q-r}_{(2);J}(X),$$
		where $k:=p+q$.
	\end{proposition}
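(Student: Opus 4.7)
The plan is to combine the fiberwise Lefschetz decomposition with the characterization of harmonic forms from Lemma \ref{L8} and the symplectic identities valid on any almost K\"{a}hler manifold.

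First, by Lemma \ref{L8}, $\alpha\in\mathcal{H}^{p,q}_{(2);J}(X)$ is an $L^{2}$ $(p,q)$-form with $d\alpha=0$ and $d^{\La}\alpha=0$. I would then apply the pointwise $\mathfrak{sl}_{2}$-Lefschetz decomposition to write uniquely
$$\alpha=\sum_{r\geq\max(0,k-n)} L^{r}B_{p-r,q-r},\qquad B_{p-r,q-r}\in P^{p-r,q-r}_{J}.$$
Using the Weil relation (\ref{E5}), the pairings $\langle L^{r}B, L^{r'}B'\rangle_{L^{2}}$ vanish for $r\neq r'$ and equal an explicit positive multiple of $\langle B,B'\rangle_{L^{2}}$ when $r=r'$, so this decomposition is $L^{2}$-orthogonal and each $B_{p-r,q-r}$ automatically lies in $L^{2}$.

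The heart of the argument is to show each summand $L^{r}B_{p-r,q-r}$ is itself $\D_{d+d^{\La}}$-harmonic, equivalently that $B_{p-r,q-r}\in P\mathcal{H}^{p-r,q-r}_{(2);J}(X)$. The inputs are the symplectic identities holding on any almost K\"{a}hler manifold because $d\w=0$, namely $[L,d]=0$ and $d^{\La}=[d,\La]$, together with the derived relations
$$d^{\La}L^{r}=L^{r}d^{\La}+rL^{r-1}d,\qquad d^{\La}B=-\La(dB)\ \text{for primitive}\ B.$$
Substituting $\alpha=\sum_{r}L^{r}B_{r}$ into $d\alpha=0$ and using $[L,d]=0$ yields $\sum_{r}L^{r}(dB_{r})=0$. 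Decomposing each $dB_{r}=\sum_{s}L^{s}C_{r,s}$ into its own primitive components and invoking Lefschetz uniqueness of the $(k+1)$-form $d\alpha$ produces a system of linear relations on the primitive forms $C_{r,s}$. Combined with the analogous equations extracted from $d^{\La}\alpha=0$, from $d\La\alpha=\La d\alpha+d^{\La}\alpha=0$, and from further applications of the $\mathfrak{sl}_{2}$ raising/lowering operators, one obtains a linear system whose structure coefficients are products of the strictly positive $\mathfrak{sl}_{2}$ weights $r(n-k+r+1)$ for $r\geq\max(0,k-n)$. Solving forces every $C_{r,s}=0$, hence $dB_{p-r,q-r}=0$, and primitivity then yields $d^{\La}B_{p-r,q-r}=-\La(dB_{p-r,q-r})=0$. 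For the reverse inclusion, given $B\in P\mathcal{H}^{p-r,q-r}_{(2);J}(X)$, the two displayed formulas immediately give $d(L^{r}B)=L^{r}dB=0$ and $d^{\La}(L^{r}B)=L^{r}d^{\La}B+rL^{r-1}dB=0$, so $L^{r}B\in\mathcal{H}^{p,q}_{(2);J}(X)$.

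The main obstacle is the decoupling step just described: extracting the individual closedness $dB_{p-r,q-r}=0$ from the two coupled identities $d\alpha=0$ and $d^{\La}\alpha=0$. The difficulty is that $d$ raises degree by one, so each $dB_{p-r,q-r}$ typically has several nonzero Lefschetz components which mix across $r$ in the single equation $\sum_{r}L^{r}(dB_{r})=0$; the resolution is to generate sufficiently many independent linear relations via iterated application of $\La$ (combined with the commutation $d\La=\La d+d^{\La}$ to control how $d$-closedness propagates) and to exploit the non-vanishing of the $\mathfrak{sl}_{2}$ weight coefficients precisely on the admissible range $r\geq\max(0,k-n)$.
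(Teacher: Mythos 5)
Your overall strategy (Lefschetz-decompose $\alpha$, show each primitive piece is separately $d$- and $d^{\La}$-closed, and use $L^{2}$-orthogonality of the $L^{r}$-components) is viable, and your reverse inclusion is fine. But the step you yourself flag as ``the heart of the argument'' --- extracting $dB_{p-r,q-r}=0$ from the coupled equations $d\alpha=0$ and $d^{\La}\alpha=0$ --- is only described, not carried out: you assert that ``solving forces every $C_{r,s}=0$'' without exhibiting the linear system or solving it. To close this you need the structural fact (Tseng--Yau, Lemma 2.4, quoted later in the paper) that for primitive $B$ the form $dB$ has exactly two Lefschetz components, $dB=B^{0}+LB^{1}$ with $B^{0},B^{1}$ primitive, together with $d^{\La}B=-(n-k+1)B^{1}$. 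Then $d\alpha=0$ gives $B^{0}_{s}+B^{1}_{s-1}=0$ at each Lefschetz level $s$, $d^{\La}\alpha=0$ gives $(s+1)B^{0}_{s+1}-(n-k+s+1)B^{1}_{s}=0$, and eliminating $B^{0}_{s+1}$ yields $(n-k+2s+2)B^{1}_{s}=0$ with coefficient at least $2$ on the admissible range of $s$, whence all $B^{0},B^{1}$ vanish. As written, your proof has a genuine gap precisely at this point.

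Note also that the paper sidesteps the decoupling entirely by working with the second-order operator $\D_{d+d^{\La}}=d^{\ast}d+d^{\La\ast}d^{\La}$: since $[\D_{d+d^{\La}},L]=0$ and $[\D_{d+d^{\La}},\La]=0$, applying $\D_{d+d^{\La}}$ to $\alpha=\sum_{r}L^{r}\b_{k-2r}$ gives $0=\sum_{r}L^{r}(\D_{d+d^{\La}}\b_{k-2r})$ with each $\D_{d+d^{\La}}\b_{k-2r}$ primitive, so uniqueness of the pointwise Lefschetz decomposition immediately forces $\D_{d+d^{\La}}\b_{k-2r}=0$; Lemma \ref{L3} (the cutoff argument, which is where completeness enters) then yields $d\b_{k-2r}=d^{\La}\b_{k-2r}=0$, and Lemma \ref{L8} converts between $\D_{d+d^{\La}}$-harmonicity and $\De_{d}$-harmonicity. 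You should either adopt this commutator route or write out the two-term recursion above explicitly.
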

	\begin{proof}
		Let $\a_{k}$ be a $L^{2}$ $\mathcal{D}_{d+d^{\La}}$-harmonic form of bi-degree $(p,q)$ on $X$, $k:=p+q$. Following primitive decomposition formula, see \cite[Proposition 1.2.30]{Huy} or \cite[Charp VI. (5.15)]{Dem}, we can denote $$\a_{k}=\sum_{r\geq\max\{0,k-n\}}L^{r}\b_{k-2r},$$ 
		where $\b_{k-2r}\in P^{k-2r}$. Following \cite[Lemma 2.3, 2.10]{Tseng-Yau}, we have
		$$[\D_{d+d^{\La}},L]=[d^{\ast}d+d^{\La_{\ast}}d^{\La},L]=[d^{\ast},L]d+d^{\La_{\ast}}[d^{\La},L]=0.$$
		Therefore,
		\begin{equation}\label{E1}
		0=\D_{d+d^{\La}}\a_{k}=\sum_{r\geq\max\{0,k-n\}}L^{r}(\D_{d+d^{\La}}\b_{k-2r}).
		\end{equation}
		Noting that the operator $\mathcal{D}_{d+d^{\La}}$ communicates with $\La$, see \cite[Lemma 3.7]{Tseng-Yau}, 
		$$[\mathcal{D}_{d+d^{\La}},\La]=[d^{\ast}d+d^{\La_{\ast}}d^{\La},\La]=d^{\ast}[d,\La]+[d^{\La_{\ast}},\La]d^{\La}=0.$$
		Then $\La\mathcal{D}_{d+d^{\La}}\b_{k-2r}=0$, i.e., $\mathcal{D}_{d+d^{\La}}\b_{k-2r}\in P^{k-2r}$. Using the fact (\cite[Proposition 1.2.30]{Huy})
			$$ \Om^{k}(X)=\bigoplus_{r\geq\max\{0,k-n\}}L^{r}P^{k-2r}(X).$$
	Hence following (\ref{E1}), we then have
		$$\D_{d+d^{\La}}\b_{k-2r}=0.$$
		It implies that
		$$\mathcal{H}^{p,q}_{(2),d+d^{\La}}(X)=\bigoplus_{r\geq\max\{0,k-n\}}L^{r}P\mathcal{H}^{p-r,q-r}_{(2),d+d^{\La}}.$$
		Noting that $$P\mathcal{H}^{p-r,q-r}_{(2),d+d^{\La}}=P\mathcal{H}^{p-r,q-r}_{(2);J}.$$ 
		Following Weil formula, 
		$$\ast L^{r}\b_{k-2r}=(-1)^{\frac{(k-2r)(k-2r+1)}{2}}(\sqrt{-1})^{p-q} L^{n-k+r}\b_{k-2r}.$$
		Therefore, $L^{r}\b_{k-2r}\in\mathcal{H}^{p,q}_{(2)}(X)$. We get
		$$\mathcal{H}^{p,q}_{(2);d+d^{\La}}=\bigoplus_{r\geq\max\{0,k-n\}}L^{r}P\mathcal{H}^{p-r,q-r}_{(2),d+d^{\La}}=\bigoplus_{r\geq\max\{0,k-n\}}L^{r}P\mathcal{H}^{p-r,q-r}_{(2);J}\subset\mathcal{H}^{p,q}_{(2);J}(X).$$
		Hence the conclusion follows form Lemma \ref{L8}.
	\end{proof}
\begin{remark}
In \cite{CW1,CW2}, the authors extended the K\"{a}hler identities to the non-integrable setting. In fact, Proposition \ref{P7} is the generalized Hard Lefchetz Duality of the space of $(p,q)$-harmonic forms on compact almost K\"{a}hler manifolds, see \cite[Theorem 5.1]{CW1}.
\end{remark}
	\subsection{Symplectic parabolic}
	Let now $(X^{2n},J)$ be a almost K\"{a}hler manifold and $g$ be a Hermitian metric. Then according to Cao-Xavier \cite{CX} and Jost-Zuo \cite{JZ}, if $J$ is integrable and $\w$ is $d$(sublinear), then $\mathcal{H}^{p,q}_{(2)}(X)=\{0\}$, unless $k:=p+q=n$. In this section we will see that the same conclusions hold in the category of almost K\"{a}hler manifolds.
	\begin{proposition}\label{P3}
		Let $(X^{2n},J,\w)$ be a complete symplectic  manifold with a $d$(sublinear) symplectic form $\w$. Then any $\a\in P\mathcal{H}^{p,q}_{(2);J}(X)$ of degree $k:=p+q<n$ vanishes.
	\end{proposition}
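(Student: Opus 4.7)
The plan is to adapt the Gromov--Cao-Xavier--Jost-Zuo Stokes argument to the almost K\"ahler primitive setting, substituting $\mathcal{J}\a$ for the complex conjugate $\bar{\a}$ used in the K\"ahler proof.

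First, I would apply the Weil identity (\ref{E5}) with $r=0$ to the primitive $(p,q)$-form $\a$, which (since $k=p+q<n$) yields $\ast\a=(-1)^{k(k+1)/2}\frac{1}{(n-k)!}\w^{n-k}\wedge\mathcal{J}\a$. Consequently,
$$\|\a\|^2_{L^2(X)}=\int_X\a\wedge\ast\a=c_{n,k}\int_X\a\wedge\mathcal{J}\a\wedge\w^{n-k}$$
for a nonzero real constant $c_{n,k}$. Since $\w=d\theta$ with $d\w=0$ and $k<n$, I rewrite $\w^{n-k}=d(\theta\wedge\w^{n-k-1})$ and then, by Leibniz,
$$\a\wedge\mathcal{J}\a\wedge\w^{n-k}=d\bigl(\a\wedge\mathcal{J}\a\wedge\theta\wedge\w^{n-k-1}\bigr)-d(\a\wedge\mathcal{J}\a)\wedge\theta\wedge\w^{n-k-1}.$$

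Next, I would verify that $d(\a\wedge\mathcal{J}\a)=0$. Harmonicity combined with Lemmas~\ref{L8} and~\ref{L3} gives $d\a=0$ and $d^{\La}\a=0$. Using that $\mathcal{J}$ acts on a bidegree-$(p,q)$ component by the scalar $i^{p-q}$, together with the type decomposition $d=\mu+\pa+\bar{\pa}+\bar{\mu}$, a bidegree analysis of the system $\{d\a=0,\,d^{\La}\a=0\}$ forces $d(\mathcal{J}\a)=0$, whence $d(\a\wedge\mathcal{J}\a)=d\a\wedge\mathcal{J}\a+(-1)^{k}\a\wedge d(\mathcal{J}\a)=0$.

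For the Stokes step, I take cutoffs $\{f_R\}\subset C^{\infty}_{0}(X)$ with $f_R\equiv 1$ on $B_{R/2}(x_0)$, $\mathrm{supp}\,f_R\subset B_R(x_0)$, and $|df_R|_g\leq C/R$. Applying Lemma~\ref{L9} to the compactly supported $L^1$ form $f_R\cdot\a\wedge\mathcal{J}\a\wedge\theta\wedge\w^{n-k-1}$ gives
$$\int_X f_R\,\a\wedge\mathcal{J}\a\wedge\w^{n-k}=-\int_X df_R\wedge\a\wedge\mathcal{J}\a\wedge\theta\wedge\w^{n-k-1}.$$
The right-hand side is bounded by $\frac{C(1+R)}{R}\int_{B_R\setminus B_{R/2}}|\a|^{2}\,dvol$, which vanishes as $R\to\infty$ because $\a\in L^{2}(X)$ and $|\theta|\leq c(1+R)$ on $B_R$ by sublinearity. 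Passing to the limit and combining with the identity of the first paragraph gives $\|\a\|^{2}_{L^{2}(X)}=0$, so $\a\equiv 0$.

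The main obstacle is the verification that $d(\a\wedge\mathcal{J}\a)=0$. In the K\"ahler case one splits $\a=\b+\bar{\b}$ and immediately has $d\b=d\bar{\b}=0$ by a bidegree comparison, but in almost K\"ahler the Nijenhuis pieces $\mu$ and $\bar{\mu}$ shift bidegrees by $(2,-1)$ and $(-1,2)$, and for small $|p-q|$ the bidegrees of $d\b$ and $d\bar{\b}$ may overlap. Decoupling these contributions requires simultaneously exploiting $d\a=0$ and $d^{\La}\a=0$, or equivalently invoking the almost-K\"ahler Hodge identities of Cirici--Wilson.
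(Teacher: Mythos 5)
Your overall strategy --- the Weil relation (\ref{E5}) with $r=0$, rewriting $\w^{n-k}=d(\theta\wedge\w^{n-k-1})$, and a cutoff Stokes argument exploiting the sublinear bound on $\theta$ --- is exactly the paper's. The step you single out as the ``main obstacle,'' namely proving $d(\mathcal{J}\a)=0$, is however not an obstacle in this setting, and the paper never has to confront it: on a form of pure bidegree $(p,q)$ the operator $\mathcal{J}=\sum(\sqrt{-1})^{p-q}\Pi^{p,q}$ acts as multiplication by the scalar $(\sqrt{-1})^{p-q}$, so $d(\mathcal{J}\a)=(\sqrt{-1})^{p-q}\,d\a=0$ follows at once from $d\a=0$, which you already have via Lemmas \ref{L8} and \ref{L3}. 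No bidegree bookkeeping for $\mu,\bar{\mu}$ and no Cirici--Wilson identities are needed. The paper simply absorbs the scalar into the constant $C(n,p,q)=\sqrt{-1}^{\,p-q}(-1)^{k(k+1)/2}/(n-k)!$, writes $\ast\a=C(n,p,q)\,\a\wedge\w^{n-k}=d\eta$ with $\eta=C(n,p,q)\,\theta\wedge\a\wedge\w^{n-k-1}$, and runs the cutoff argument on the $L^{2}$ pairing $\langle\ast\a,f_{j}\ast\a\rangle$ rather than on the $2n$-form $\a\wedge\mathcal{J}\a\wedge\w^{n-k}$; the two formulations are equivalent. The only case in which your worry has content is if you insist on $\a$ being a \emph{real} form $\b+\bar{\b}\in\Om^{p,q}_{J}(X)$ with $p\neq q$, where $d\a=0$ need not split into $d\b=d\bar{\b}=0$; the remedy is simply to argue on the complex pure-type components, which is what the paper's choice of constant implicitly does. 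So your proof is complete once you add the one-line observation that $\mathcal{J}$ is scalar on $\Om^{p,q}_{J}(X,\C)$; as written, the gap you flag is self-inflicted by the $\a\wedge\mathcal{J}\a$ formulation.

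One genuine (and welcome) difference is the limiting argument. The paper's cutoffs $f_{j}=h(\rho(\cdot,x_{0})-j)$ have differentials that are bounded but not small, supported on unit annuli, so the boundary term is only controlled by $(j+1)\int_{B_{j+1}\setminus B_{j}}|\a|^{2}$ and one must extract a subsequence along which this tends to zero (otherwise $\sum 1/(j+1)$ would contradict $\a\in L^{2}$). Your wide-annulus cutoffs with $|df_{R}|\leq C/R$ keep the prefactor $(1+R)/R$ bounded, so the boundary term vanishes outright and no subsequence extraction is needed. Both are correct; yours is marginally cleaner on this point.
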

	\begin{proof}
		For any $\a\in P^{k}$, $k:=p+q<n$, following  Weil formula (\ref{E5}), it implies that
		$$\ast\a= C(n,p,q)\a\wedge\w^{n-k},$$
		where $C(n,p,q)=\sqrt{-1}^{p-q}(-1)^{\frac{k(k+1)}{2}}\frac{1}{(n-k)!}$. By hypothesis, there exists a 1-form $\theta$ with $\w=d\theta$ and
		$$\|\theta(x)\|_{L_{\infty}}\leq c(1+\rho(x,x_{0})),$$
		where $c$ is an absolute constant. In what follows we assume that the distance function $\rho(x, x_{0})$ is smooth for $x\neq x_{0}$. The general case follows easily by an approximation argument. We observe that
		$$\ast\a=d\eta,$$ 
		where $$\eta=C(n,p,q)(\theta\wedge\a\wedge\w^{n-k-1})$$
		
		Let $h:\mathbb{R}\rightarrow\mathbb{R}$ be smooth, $0\leq h\leq1$,
		$$
		h(t)=\left\{
		\begin{aligned}
		1, &  & t\leq0 \\
		0,  &  & t\geq1
		\end{aligned}
		\right.
		$$
		and consider the compactly supported function
		$$f_{j}(x)=h(\rho(x_{0},x)-j),$$
		where $j$ is a positive integer.
		
		Noticing that $f_{j}\ast\a$ has compact support, one has
		\begin{equation}\label{E03}
		\begin{split}
		\langle \ast\a,f_{j}\ast\a\rangle_{L^{2}(X)}&=\langle d\eta,f_{j}\ast\a\rangle_{L^{2}(X)}\\
		&=\langle\eta,d^{\ast}(f_{j}\ast\a)\rangle_{L^{2}(X)}\\
		&=\langle\theta\wedge\a\wedge\w^{n-k-1}, d^{\ast}(f_{j}\ast\a)\rangle_{L^{2}(X)}\\
		&=\langle\theta\wedge\a\wedge\w^{n-k-1}, \ast(df_{j}\wedge\a)\rangle_{L^{2}(X)}.\\
		\end{split}
		\end{equation}
		Since $0\leq f_{j}\leq 1$ and $\lim_{j\rightarrow\infty}f_{j}(x)(\ast\a)(x)=\ast\a(x)$, it follows from the dominated convergence theorem that
		\begin{equation}\label{E04}
		\lim_{j\rightarrow\infty}\langle \ast\a,f_{j}\ast\a\rangle_{L^{2}(X)}=\|\a\|^{2}_{L^{2}(X)}.
		\end{equation}
		Since $\w$ is bounded, $supp(df_{j})\subset B_{j+1}\backslash B_{j}$ and $\|\theta(x)\|_{L_{\infty}}=O(\rho(x_{0},x))$, one obtains that
		\begin{equation}\label{E02}
		\langle\theta\wedge\a\wedge\w^{n-k-1}, \ast(df_{j}\wedge\a\rangle_{L^{2}(X)}\leq (j+1)C\int_{B_{j+1}\backslash B_{j}}|\a(x)|^{2}dx,
		\end{equation}
		where $C$ is a constant independent of $j$.
		
		We claim that there exists a subsequence $\{j_{i}\}_{i\geq1}$ such that
		\begin{equation}\label{E01}
		\lim_{i\rightarrow\infty}(j_{i}+1)\int_{B_{j_{i}+1}\backslash B_{j_{i}}}|\a(x)|^{2}dx=0.
		\end{equation}
		If not, there exists a positive constant $a$ such that
		$$\lim_{j\rightarrow\infty}(j+1)\int_{B_{j+1}\backslash B_{j}}|\a(x)|^{2}dx\geq a>0.$$
		This inequality implies
		\begin{equation}\nonumber
		\begin{split}
		\int_{X}|\a(x)|^{2}dx&=\sum_{j=0}^{\infty}\int_{B_{j+1}\backslash B_{j}}|\a(x)|^{2}dx\\
		&\geq a\sum_{j=0}^{\infty}\frac{1}{j+1}\\
		&=+\infty\\
		\end{split}
		\end{equation}
		which is a contradiction to the assumption $\int_{X}|\a(x)|^{2}dx<\infty$. Hence, there exists a subsequence $\{j_{i}\}_{i\geq1}$ for which (\ref{E01}) holds. Using (\ref{E02}) and (\ref{E01}), one obtains
		\begin{equation}\label{E05}
		\lim_{i\rightarrow\infty}\langle\theta\wedge\a\wedge\w^{n-k-1}, \ast(df_{j_{i}}\wedge\a)\rangle_{L^{2}(X)}=0
		\end{equation}
		It now follows from (\ref{E03}), (\ref{E04}) and (\ref{E05}) that $\|\a\|_{L^{2}(X)}=0$, i.e, $\a=0$. 
	\end{proof}
	Following the $L^{2}$-decomposition in Proposition \ref{P7}, we then have
	\begin{theorem}\label{T4}
		Let $(X^{2n},J,\w)$ be a complete almost K\"{a}hler manifold with a $d$(sublinear) symplectic form $\w$.  Then  
		$$\mathcal{H}_{(2);J}^{p,q}(X)=\{0\}$$ unless $k:=p+q=n$.
	\end{theorem}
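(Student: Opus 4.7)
The plan is to derive Theorem \ref{T4} as an essentially immediate consequence of the two propositions already established in this section. Fix $\a \in \mathcal{H}^{p,q}_{(2);J}(X)$ with $k := p+q \neq n$. I would first invoke the primitive Lefschetz decomposition of Proposition \ref{P7} to write
$$\a = \sum_{r \geq \max\{0,\, k-n\}} L^{r}\b_{r}, \qquad \b_{r} \in P\mathcal{H}^{p-r,\,q-r}_{(2);J}(X),$$
so that it suffices to show that every primitive component $\b_{r}$ vanishes.

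Each $\b_{r}$ has total degree $k - 2r$, and the key combinatorial observation is that the lower bound $r \geq \max\{0, k-n\}$ built into the decomposition automatically forces $k - 2r < n$ whenever $k \neq n$. Indeed, if $k < n$ then $r \geq 0$ gives $k - 2r \leq k < n$; if $k > n$ then $r \geq k - n$ gives $k - 2r \leq 2n - k < n$. In either case Proposition \ref{P3} — which is the substantive analytic input, using the $d$(sublinear) hypothesis on $\w$, the Weil identity $\ast\a = C(n,p,q)\,\a \wedge \w^{n-k}$, and a subsequence argument exploiting $\int_X |\a|^2 < \infty$ — applies to every summand and yields $\b_{r} = 0$. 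Summing gives $\a = 0$, as desired.

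No serious obstacle is anticipated: the Lefschetz decomposition automatically shifts any would-be harmonic obstruction down to primitive pieces of degree at most $\min(k, 2n-k) < n$, so the regimes $k < n$ and $k > n$ can be treated uniformly without invoking a separate Hodge-star duality argument. The only point requiring a moment of care is the bookkeeping of the summation range $r \geq \max\{0, k-n\}$ coming from Proposition \ref{P7}, which itself ultimately rests on the almost-Kähler commutation relations $[\mathcal{D}_{d+d^{\La}}, L] = [\mathcal{D}_{d+d^{\La}}, \La] = 0$ proved via the Tseng--Yau identities. Once this range is correctly tracked, the vanishing statement follows mechanically from the two propositions.
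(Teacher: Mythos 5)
Your proposal is correct and takes essentially the same route as the paper, whose proof of Theorem \ref{T4} is literally the one-line statement that the result follows from Propositions \ref{P7} and \ref{P3}. Your added bookkeeping — checking that the summation range $r\geq\max\{0,k-n\}$ forces every primitive component to have degree $k-2r<n$ whenever $k\neq n$, so that Proposition \ref{P3} applies to each summand — is exactly the detail the paper leaves implicit, and it is carried out correctly.
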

	\begin{proof}
		The conclusion follows from Proposition \ref{P7} and \ref{P3}.
	\end{proof}
	Suppose that $J$ is integrable, i.e, $(X^{2n},J,\w)$ is a complete K\"{a}hler manifold. We have a $L^{2}$-decomposition for the space of the $L^{2}$-harmonic $k$-forms.
	\begin{lemma}\label{L2}(\cite{Huy})
		Let $(X^{2n},\w)$ be a complete K\"{a}hler manifold with a K\"{a}hler form $\w$. If $\a\in\Om^{k}_{0}(X)$, we denote $\a:=\sum_{p+q=k}\a_{p,q}$, $\a_{p,q}\in\Om^{p,q}_{0}(X)$, then we have
		$$\|d\a\|^{2}+\|d^{\ast}\a\|^{2}=\sum_{p+q=k}(\|d\a_{p,q}\|^{2}+\|d^{\ast}\a_{p,q}\|^{2}).$$
		In particular,  we have a decomposition for the space of the $L^{2}$-harmonic $k$-forms:
		$$\mathcal{H}^{k}_{(2)}(X)=\bigoplus_{p+q=k}\mathcal{H}^{p,q}_{(2)}(X).$$
	\end{lemma}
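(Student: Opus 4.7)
The plan is to leverage the K\"{a}hler identity $\De_d = 2\De_{\bar{\pa}}$, available when $J$ is integrable, from which $\De_d$ preserves bidegree: $\De_d(\Om^{p,q}_J(X)) \subset \Om^{p,q}_J(X)$. This single fact drives both assertions of the lemma.

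First I would establish the $L^2$-norm identity. For $\a \in \Om^k_0(X)$, integration by parts produces no boundary term, so
$$\|d\a\|^2 + \|d^{\ast}\a\|^2 = \langle \De_d \a, \a\rangle.$$
Writing $\a = \sum_{p+q=k} \a_{p,q}$ and using that $\De_d \a_{p,q}$ stays in bidegree $(p,q)$ while distinct bidegree components are pointwise orthogonal, the cross-terms drop out, yielding $\langle \De_d \a, \a\rangle = \sum_{p+q=k} \langle \De_d \a_{p,q}, \a_{p,q}\rangle$. A second integration by parts on each $\a_{p,q}$ (still compactly supported, as it is the bidegree projection of $\a$) then produces the claimed identity.

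For the harmonic-form decomposition, I would start with $\a \in \mathcal{H}^k_{(2)}(X)$, project onto each bidegree to obtain $\a_{p,q} \in \Om^{p,q}_{(2);J}(X)$ (the projection is continuous in $L^2$), and observe that $\De_d \a = 0$ distributionally together with commutation of $\De_d$ with the bidegree projection forces $\De_d \a_{p,q} = 0$ distributionally. Elliptic regularity makes each $\a_{p,q}$ smooth, and completeness of $X$ combined with $L^2$-membership upgrades this to $d\a_{p,q} = d^{\ast}\a_{p,q} = 0$, so $\a_{p,q} \in \mathcal{H}^{p,q}_{(2)}(X)$. Orthogonality of the direct sum is immediate from the pointwise orthogonality of distinct bidegrees.

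The main obstacle is essentially bookkeeping: in the paper's convention $\Om^{p,q}_J(X)$ consists of real forms whose complexifications span $\Om^{p,q}_J(X,\C) \oplus \Om^{q,p}_J(X,\C)$, so one must confirm that $\De_d$ respects this real structure. Since the K\"{a}hler identity preserves each complex bidegree and complex conjugation interchanges $(p,q)$ and $(q,p)$, the preservation descends to the real spaces without difficulty. The rest is entirely standard K\"{a}hler Hodge theory.
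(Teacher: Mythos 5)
Your proposal is correct and follows essentially the same route as the paper: both rest on the K\"{a}hler identity $\De_{d}=2\De_{\bar{\pa}}$, the fact that $\De_{\bar{\pa}}$ preserves bidegree so cross-terms vanish by pointwise orthogonality, and integration by parts on compactly supported forms. Your extra remarks on the harmonic decomposition and the real $(p,q)$-convention only spell out details the paper leaves implicit.
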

	\begin{proof}
		We denote $\De_{d}=dd^{\ast}+d^{\ast}d$ and $\De_{\bar{\pa}}=\bar{\pa}\bar{\pa}^{\ast}+\bar{\pa}^{\ast}\bar{\pa}$. We have an identity $\De_{d}=2\De_{\bar{\pa}}$. For any $\a\in\Om^{k}_{0}(X)$, we get
		\begin{equation*}
		\begin{split}
		\|d\a\|^{2}+\|d^{\ast}\a\|^{2}:&=\langle\De_{d}\a,\a\rangle_{L^{2}(X)}\\
		&=2\langle\De_{\bar{\pa}}\a,\a\rangle_{L^{2}(X)}\\
		&=2\langle\sum_{p,q}\De_{\bar{\pa}}\a_{p,q},\sum_{p,q}\a_{p,q}\rangle_{L^{2}(X)}\\
		&=2\sum_{p,q}\langle\De_{\bar{\pa}}\a_{p,q},\a_{p,q}\rangle_{L^{2}(X)}\\
		&=\sum_{p,q}\langle\De_{d}\a_{p,q},\a_{p,q}\rangle_{L^{2}(X)}\\
		&=\sum_{p,q}(\|d\a_{p,q}\|^{2}+\|d^{\ast}\a_{p,q}\|^{2}).\\
		\end{split}
		\end{equation*}
		Here we use the fact that $\De_{\bar{\pa}}\a_{p,q}$ is a $(p,q)$-form.
	\end{proof}
	Following the $L^{2}$-decomposition in Lemma \ref{L2}, we have
	\begin{corollary}(\cite{CX,JZ})\label{C2}
		Let $(X^{2n},\w)$ be a complete K\"{a}hler  manifold with a $d$(sublinear) K\"{a}hler form $\w$.  Then $$\mathcal{H}_{(2)}^{k}(X)=\{0\}$$ unless $k=n$.
	\end{corollary}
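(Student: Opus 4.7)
The plan is to combine two ingredients already established in the excerpt: the bi-degree vanishing Theorem~\ref{T4}, and the orthogonal bi-degree decomposition of harmonic $k$-forms in the K\"{a}hler setting provided by Lemma~\ref{L2}.

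First, since a K\"{a}hler manifold is in particular an almost K\"{a}hler manifold (with integrable $J$), the hypothesis that $\w$ is $d$(sublinear) places us directly in the scope of Theorem~\ref{T4}. Consequently $\mathcal{H}^{p,q}_{(2);J}(X) = \{0\}$ for every $(p,q)$ with $p+q \neq n$. The integrability of $J$ identifies $\Om^{p,q}_{J}(X)$ with the usual Dolbeault $(p,q)$ space, so this vanishing is exactly $\mathcal{H}^{p,q}_{(2)}(X) = \{0\}$ whenever $p+q \neq n$.

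Next, for $k \neq n$, I would apply Lemma~\ref{L2} to decompose $\mathcal{H}^{k}_{(2)}(X) = \bigoplus_{p+q=k}\mathcal{H}^{p,q}_{(2)}(X)$. Every summand on the right is zero by the previous step, so the left-hand side vanishes. The key identity supporting this decomposition is the K\"{a}hler relation $\De_{d} = 2\De_{\bar{\pa}}$, which guarantees that $\De_{d}$ preserves bi-degree; thus each component $\a_{p,q}$ of a harmonic $\a$ is itself harmonic, and no cutoff argument beyond what is already present in Lemma~\ref{L2} is required to pass from compactly supported forms to $L^{2}$-harmonic ones.

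The conceptual obstacle has already been absorbed into Theorem~\ref{T4}, whose proof rests on the primitive vanishing in Proposition~\ref{P3} together with the Lefschetz-type decomposition of harmonic $(p,q)$-forms in Proposition~\ref{P7}. For the corollary itself, the only additional input is the reduction from $k$-forms to $(p,q)$-forms via the K\"{a}hler identity, so the proof amounts to assembling these two results and recording that the almost-K\"{a}hler harmonic space $\mathcal{H}^{p,q}_{(2);J}(X)$ coincides with the classical $\mathcal{H}^{p,q}_{(2)}(X)$ in the integrable case.
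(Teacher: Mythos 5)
Your proposal is correct and follows exactly the paper's own route: the paper proves Corollary~\ref{C2} by combining the bi-degree decomposition $\mathcal{H}^{k}_{(2)}(X)=\bigoplus_{p+q=k}\mathcal{H}^{p,q}_{(2)}(X)$ of Lemma~\ref{L2} (resting on $\De_{d}=2\De_{\bar{\pa}}$) with the vanishing of each $\mathcal{H}^{p,q}_{(2);J}(X)$ for $p+q\neq n$ from Theorem~\ref{T4}. Your additional remark identifying $\Om^{p,q}_{J}$ with the usual Dolbeault spaces in the integrable case is a harmless elaboration of what the paper leaves implicit.
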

	\begin{proof}
		The conclusion follows form Lemma \ref{L2} and Theorem \ref{T4}.
	\end{proof}
	\subsection{Symplectic hyperbolic}
	In this section, we extend the idea of \cite[Theorem 3.7]{Hua} to the case of  symplectic manifold with a $d$(bounded) symplectic form $\w$. 
	\begin{proposition}\label{P1}
		Let $(X^{2n},J,\w)$ be a complete almost K\"{a}hler  manifold with a $d$(bounded) symplectic form $\w$, i.e., there exists a bounded $1$-form $\theta$ such that $\w=d\theta$. Then every $(p,q)$-form $\a\in P_{J}^{p,q}\cap\Om^{k}_{0}\subset P^{k}\cap\Om^{k}_{0}$ on $X$ of degree $k:=p+q<n$ satisfies the inequality
		\begin{equation}\label{E2}
		c_{n,k}\|\theta\|^{-2}_{L_{\infty}}\|\a\|^{2}_{L^{2}(X)}\leq\|d\a\|^{2}_{L^{2}(X)},
		\end{equation}
		where $c_{n,k}>0$ is a constant which depends only on $n,k$.
	\end{proposition}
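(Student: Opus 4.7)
The plan is to transplant Gromov's $d$(bounded) trick from the K\"{a}hler setting to the bi-graded almost K\"{a}hler setting, using the Weil identity (\ref{E5}) as the algebraic starting point. Since $\alpha$ is primitive of degree $k<n$ and bi-degree $(p,q)$, formula (\ref{E5}) with $r=0$ yields
$$\ast\alpha\;=\;\frac{(-1)^{k(k+1)/2}}{(n-k)!}\,\mathcal{J}(\alpha)\wedge\omega^{n-k},$$
so that $\|\alpha\|^{2}_{L^{2}(X)}=\int_{X}\alpha\wedge\ast\alpha$ factors through the power $\omega^{n-k}$ of the symplectic form.

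The hypothesis $\omega=d\theta$ together with $d\omega=0$ then makes this factor globally exact: $\omega^{n-k}=d(\theta\wedge\omega^{n-k-1})$. Substituting this into the factored integral, applying the Leibniz rule to
$d(\alpha\wedge\mathcal{J}\alpha\wedge\theta\wedge\omega^{n-k-1})$, and using that $\alpha$ has compact support (so that Stokes kills the exact contribution by Lemma \ref{L9}) produces an identity of the schematic form
$$\|\alpha\|^{2}_{L^{2}(X)}\;=\;C(n,k)\int_{X}\bigl[\,d\alpha\wedge\mathcal{J}\alpha+(-1)^{k}\alpha\wedge d\mathcal{J}\alpha\,\bigr]\wedge\theta\wedge\omega^{n-k-1}$$
for an explicit nonzero real constant $C(n,k)$. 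The dependence of the right-hand side on $\alpha$ is now entirely through its first derivatives and through $\theta$.

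To conclude I would estimate pointwise. Since $g$ is compatible with $\omega$, the form $\omega$ has constant pointwise norm and hence $|\omega^{n-k-1}|\leq C_{1}(n,k)$; since $\mathcal{J}$ acts by the unit complex scalar $(\sqrt{-1})^{p-q}$ on each bi-degree component, $|\mathcal{J}\alpha|=|\alpha|$ and $|d\mathcal{J}\alpha|=|d\alpha|$ pointwise; and $|\theta(x)|\leq\|\theta\|_{L_{\infty}}$ by hypothesis. Combining the elementary wedge estimate with the $L^{2}$-Cauchy--Schwarz inequality then gives
$$\|\alpha\|^{2}_{L^{2}(X)}\;\leq\;C_{2}(n,k)\,\|\theta\|_{L_{\infty}}\,\|d\alpha\|_{L^{2}(X)}\,\|\alpha\|_{L^{2}(X)},$$
whence dividing by $\|\alpha\|_{L^{2}(X)}$ and squaring yields the stated inequality with $c_{n,k}:=C_{2}(n,k)^{-2}$.

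The main technical point to watch is the appearance of $d\mathcal{J}\alpha$ in the integration-by-parts step: $d$ does not commute with $\mathcal{J}$ even when $J$ is integrable, since $\mathcal{J}$ is the bi-degree eigenoperator while $d$ shifts bi-degree. The observation that saves the argument is that $\mathcal{J}$ is pointwise unitary, so $|d\mathcal{J}\alpha|=|d\alpha|$ holds pointwise regardless of the integrability of $J$; this avoids any $\nabla J$-correction and keeps $c_{n,k}$ depending only on $n$ and $k$, as required.
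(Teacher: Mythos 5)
Your argument is correct and is essentially the paper's own proof: both rest on the Weil identity $\ast\alpha=C(n,k)\,\alpha\wedge\omega^{n-k}$ for a primitive $(p,q)$-form of degree $k<n$, the exactness $\omega^{n-k}=d(\theta\wedge\omega^{n-k-1})$, a single integration by parts justified by the compact support of $\alpha$, and the Cauchy--Schwarz inequality, the only cosmetic difference being that the paper phrases the integration by parts via the adjoint $d^{\ast}$ and the splitting $\ast\alpha=d\eta-\tilde{\alpha}$ rather than via Stokes on a wedge product. (Your closing worry about $d$ versus $\mathcal{J}$ is moot here: since $\alpha$ has pure bi-degree, $\mathcal{J}\alpha=(\sqrt{-1})^{p-q}\alpha$ is a constant scalar multiple of $\alpha$, so $d\mathcal{J}\alpha=(\sqrt{-1})^{p-q}d\alpha$ exactly.)
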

	\begin{proof}
		Inequality (\ref{E2}) makes sense, strictly speaking, if $d\a$ (as well as $\a$) is in $L^{2}$. The linear map $L^{n-k}:\Om^{k}\rightarrow\Om^{2n-k}$ for $k\leq n-1$ is a bijective quasi-isometry on $P_{J}^{p,q}$ $(p+q=k)$, thus any $\a\in P_{J}^{p,q}$ satisfies
		$$\a=C(n,k)\ast L^{n-k}\a=C(n,k)\ast(\a\wedge\w^{n-k}),$$
		where $C(n,k)=\sqrt{-1}^{p-q}(-1)^{\frac{k(k+1)}{2}}\frac{1}{(n-k)!}$. We denote $\ast\a=d\eta-\tilde{\a}$, for $$\eta=C(n,k)(\theta\wedge\a\wedge\w^{n-k-1}),\ \tilde{\a}=C(n,k)(\theta\wedge d\a\wedge\w^{n-k-1})$$
		Observe that
		$\|\a\|_{L^{2}(X)}=\|\ast\a\|_{L^{2}(X)}$ and
		$\|\eta\|_{L^{2}(X)}\lesssim\|\theta\|_{L_{\infty}}\|\a\|_{L^{2}(X)}$. Now, we have
		\begin{equation}\label{E6}
		\begin{split}
		\|\a\|^{2}_{L^{2}(X)}&=\langle\ast\a,d\eta-\tilde{\a}\rangle_{L^{2}(X)}\\
		&\leq |\langle\ast\a,d\eta\rangle_{L^{2}(X)}|+|\langle\ast\a,\tilde{\a}\rangle|_{L^{2}(X)}\\
		&:=I_{1}+I_{2}.\\
		\end{split}
		\end{equation}
		The first term of the right hand:
		\begin{equation}\label{E7}
		\begin{split}
		I_{1}=|\langle\ast\a,d\eta\rangle_{L^{2}(X)}|&=|\langle\ast d\a,\eta\rangle_{L^{2}(X)}|\\
		&\leq\|d\a\|_{L^{2}(X)}\|\eta\|_{L^{2}(X)}\\
		&\lesssim\|d\a\|_{L^{2}(X)}\|\theta\|_{L_{\infty}}\|\a\|_{L^{2}(X)}\\
		\end{split}
		\end{equation}
		The second term of right hand:
		\begin{equation}\label{E10}
		\begin{split}
		I_{2}=|\langle\a,\tilde{\a}\rangle_{L^{2}(X)}|&\leq\|\a\|_{L^{2}(X)}\|\tilde{\a}\|_{L^{2}(X)}\\
		&\lesssim\|\a\|_{L^{2}(X)}\|\theta\|_{L{\infty}}\|d\a\|_{L^{2}(X)}.\\
		\end{split}
		\end{equation}
		Substituting (\ref{E7}) and  (\ref{E10}) into (\ref{E6}), it follows that 
		$$\|\a\|^{2}_{L^{2}(X)}\lesssim\|\a\|_{L^{2}(X)}\|\theta\|_{L_{\infty}}\|d\a\|_{L^{2}(X)}.$$
		Therefore, we obtain the inequality (\ref{E2}).
	\end{proof}
	\begin{lemma}\label{L11}
		If $B_{k}\in P^{k}$, $(k\leq n)$, then for any $0\leq j\leq i\leq (n-k) $, we have
		\begin{equation}\label{E4}
		\langle L^{i}B_{k},L^{i}A_{k}\rangle_{L^{2}(X)}=\frac{(n-k-i+j)!i!}{(n-k-i)!(i-j)!}\langle L^{i-j}B_{k},L^{i-j}A_{k}\rangle_{L^{2}(X)},\ \forall A_{k}\in\Om^{k}.
		\end{equation}
	\end{lemma}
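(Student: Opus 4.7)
The plan is to reduce both inner products appearing in (\ref{E4}) to a single, $i$-independent integral by invoking the Weil relation (\ref{E5}) for the primitive form $B_{k}$; the claimed combinatorial factor will then emerge simply as the ratio of the two scalar prefactors supplied by Weil.

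First I would unpack the $L^{2}$ inner product pointwise as $\langle L^{i}B_{k},L^{i}A_{k}\rangle_{L^{2}(X)}=\int_{X}L^{i}A_{k}\wedge\ast L^{i}B_{k}$ and substitute the Weil identity (\ref{E5}), which is available because $B_{k}$ is primitive and $0\leq i\leq n-k$:
$$\ast L^{i}B_{k}=(-1)^{\frac{k(k+1)}{2}}\frac{i!}{(n-k-i)!}L^{n-k-i}\mathcal{J}(B_{k}).$$
Next I would exploit that $\w$ is a $2$-form, so it wedge-commutes with everything (no sign) and $\w^{i}\wedge\w^{n-k-i}=\w^{n-k}$; consequently $\w^{i}\wedge A_{k}\wedge\w^{n-k-i}\wedge\mathcal{J}(B_{k})=A_{k}\wedge\w^{n-k}\wedge\mathcal{J}(B_{k})$ pointwise. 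This gives
$$\langle L^{i}B_{k},L^{i}A_{k}\rangle_{L^{2}(X)}=(-1)^{\frac{k(k+1)}{2}}\frac{i!}{(n-k-i)!}\,Q,$$
with $Q:=\int_{X}A_{k}\wedge\w^{n-k}\wedge\mathcal{J}(B_{k})$ manifestly independent of $i$.

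Finally, applying exactly the same computation with $i$ replaced by $i-j$ (which lies in $[0,n-k]$ thanks to the hypotheses $0\leq j\leq i\leq n-k$, the very place where these bounds are used) yields
$$\langle L^{i-j}B_{k},L^{i-j}A_{k}\rangle_{L^{2}(X)}=(-1)^{\frac{k(k+1)}{2}}\frac{(i-j)!}{(n-k-i+j)!}\,Q,$$
and the ratio of the two identities produces the factor $\frac{i!(n-k-i+j)!}{(i-j)!(n-k-i)!}$ claimed in (\ref{E4}), after the common sign and the common quantity $Q$ cancel.

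The main obstacle I anticipate is essentially bookkeeping rather than substance: one must verify that (\ref{E5}) is legitimately applicable at both powers $i$ and $i-j$, and that no wedge-sign arises in rearranging $\w^{i}$ past $A_{k}$ — both of which are immediate. No appeal to $\mathfrak{sl}_{2}$-commutation identities such as $[\Lambda,L^{r}]B_{k}=r(n-k-r+1)L^{r-1}B_{k}$ on primitive forms is needed, although that would furnish an alternative derivation; I prefer the Weil route since it avoids computing any commutator and works uniformly in the almost K\"{a}hler setting, where the metric enters only through the Hodge star and $\mathcal{J}$.
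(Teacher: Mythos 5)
Your proof is correct, but it follows a genuinely different route from the paper's. The paper stays entirely inside the $\mathfrak{sl}_2$-module structure: it writes $\langle L^{i}B_{k},L^{i}A_{k}\rangle=\langle[\La,L^{i}]B_{k},L^{i-1}A_{k}\rangle$ (using $\La=L^{\ast}$ together with $\La B_{k}=0$), invokes $[L^{i},\La]=i(k-n+i-1)L^{i-1}$ on $\Om^{k}$ from Huybrechts, and peels off one power of $L$ at a time, telescoping the resulting scalars $i(n+1-k-i)$, $(i-1)(n+2-k-i),\dots$ into the stated factorial ratio. You instead apply the Weil relation (\ref{E5}) at the powers $i$ and $i-j$ (both admissible since $0\leq i-j\leq i\leq n-k$), note that $\w^{i}\wedge A_{k}\wedge\w^{n-k-i}\wedge\mathcal{J}(B_{k})=A_{k}\wedge\w^{n-k}\wedge\mathcal{J}(B_{k})$ is independent of $i$ because $\w$ has even degree, and read off the constant as the ratio of the two Weil prefactors. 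Both arguments are pointwise and elementary. Yours avoids the telescoping bookkeeping entirely — the combinatorics has already been packaged into (\ref{E5}), which the paper records and uses elsewhere (e.g.\ in Proposition \ref{P3}) — while the paper's route is marginally leaner in hypotheses, needing only $\La B_{k}=0$ and the $L$--$\La$ adjunction, with no reference to $J$ or $\mathcal{J}$. Two minor points of hygiene: the step $\langle L^{i}B_{k},L^{i}A_{k}\rangle=\int_{X}L^{i}A_{k}\wedge\ast L^{i}B_{k}$ uses symmetry of the inner product, which is legitimate here because the paper's $(p,q)$-forms are real; and your final ``ratio'' should be read as expressing both inner products as explicit multiples of the common quantity $Q$ (so the case $Q=0$ is covered automatically) rather than as a literal division.
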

	\begin{proof}
		If $\a\in\Om^{k}$, there is a  formula \cite{Huy} Corollary 1.2.28:
		$$[L^{i},\La]\a=i(k-n+i-1)L^{i-1}\a.$$
		Therefore, we have
		\begin{equation*}
		\begin{split}
		\langle L^{i}B_{k},L^{i}A_{k}\rangle_{L^{2}(X)}&=\langle[\La,L^{i}]B_{k},L^{i-1}A_{k}\rangle_{L^{2}(X)}\\
		&=i(n+1-(k+i))\langle L^{i-1}B_{k},L^{i-1}A_{k}\rangle_{L^{2}(X)}\\
		&=i(n+1-(k+i))\langle[\La,L^{i-1}]B_{k},B_{k}\rangle_{L^{2}(X)}\\
		&=i(n+1-(k+i))(i-1)(n+1-(k+i-1))\langle L^{i-2}B_{k},L^{i-2}A_{k}\rangle_{L^{2}(X)}\\
		&=\cdots\\
		&=\frac{(n-k-i+j)!i!}{(n-k-i)!(i-j)!}\langle L^{i-j}B_{k},L^{i-j}A_{k}\rangle_{L^{2}(X)}\\
		\end{split}
		\end{equation*}
		We complete this proof.
	\end{proof}
	Lefschetz decomposing $dB_{k}$, we can formally write
	$$dB_{k}=B^{0}_{k+1}+LB^{1}_{k+1}+\cdots+\frac{1}{r!}L^{r}B_{k+1-2r}+\cdots,$$
	But in fact the differential operators acting on primitive forms have
	special properties.
	\begin{lemma}(\cite[Lemma 2.4]{Tseng-Yau})
		Let $B_{k}\in P^{k}$ with $k\leq n$. The differential operators $(d,d^{\La})$ acting on $B_{k}$ take the following forms:\\
		(i) If $k<n$, then $dB_{k}=B^{0}_{k+1}+LB^{1}_{k-1}$;\\
		(ii) If $k=n$, then $dB_{k}=LB^{1}_{k-1}$;\\
		(iii) $d^{\La}B_{k}=-(n-k+1)B^{1}_{k-1}$.\\
		for some primitive forms $B^{0},B^{1}\in P^{\ast}$.
	\end{lemma}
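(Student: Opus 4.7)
The plan is to start from the Lefschetz decomposition of $dB_k$, which writes
$$dB_k = \sum_{r \geq 0} L^r B^r_{k+1-2r}, \qquad B^r_{k+1-2r} \in P^{k+1-2r},$$
with the sum running only over those $r$ for which $P^{k+1-2r}$ is nontrivial. When $k = n$ the index $r = 0$ is absent, since $P^{n+1} = 0$ by the defining condition of primitivity, so (ii) is built into this opening move; it remains to show, for $k < n$, that the tail $r \geq 2$ vanishes (claim (i)) and then to read off (iii) from the two surviving summands.

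The essential input is the identity $[L, d] = 0$, which holds because $d\w = 0$. Applying $L^{n-k+1}$ to the displayed decomposition and using the primitivity relation $L^{n-k+1} B_k = 0$ gives
$$0 = L^{n-k+1} dB_k = \sum_{r \geq 0} L^{n-k+1+r} B^r_{k+1-2r}.$$
For $r = 0$ and $r = 1$ the exponent $n-k+1+r$ strictly exceeds $n - (k+1-2r)$, so primitivity of $B^r_{k+1-2r}$ forces those terms to be zero automatically. For $r \geq 2$ the opposite inequality holds, so $L^{n-k+1+r}$ is injective on $P^{k+1-2r}$; the terms $L^{n-k+1+r} B^r_{k+1-2r}$ then sit in pairwise distinct summands $L^{s} P^m$ of the Lefschetz decomposition of $\Om^{2n-k+3}$, and uniqueness of that decomposition forces $B^r_{k+1-2r} = 0$ for every $r \geq 2$. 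This establishes (i).

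For (iii), use $\La B_k = 0$ together with the definition $d^{\La} = [d, \La]$ to write
$$d^{\La} B_k = -\La dB_k = -\La B^0_{k+1} - \La L B^1_{k-1}.$$
The first term vanishes by primitivity of $B^0_{k+1}$. For the second, specializing the commutator formula used in Lemma \ref{L11} to $i = 1$ and the primitive $(k-1)$-form $B^1_{k-1}$ yields $\La L B^1_{k-1} = (n-k+1) B^1_{k-1}$. Therefore $d^{\La} B_k = -(n-k+1) B^1_{k-1}$, as required.

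The main technical point is the bookkeeping in the second paragraph: for each $r$ one must decide whether $L^{n-k+1+r}$ annihilates $P^{k+1-2r}$ (so the summand is vacuous) or acts injectively on it (so uniqueness of the Lefschetz decomposition extracts a genuine vanishing statement for $B^r_{k+1-2r}$). Once this dichotomy is correctly set up, the remainder is a direct commutator calculation.
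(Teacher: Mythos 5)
The paper states this lemma purely as a quotation of \cite[Lemma 2.4]{Tseng-Yau} and supplies no proof of its own, so there is no internal argument to compare against; your proof is correct and is essentially the standard argument of the cited source: Lefschetz-decompose $dB_{k}$, apply $L^{n-k+1}$ using $[d,L]=0$ and $L^{n-k+1}B_{k}=0$ to force the $r\geq2$ components to vanish, and obtain (iii) from $d^{\La}B_{k}=-\La dB_{k}$ together with $[\La,L]=(n-m)\,\mathrm{id}$ on $\Om^{m}$. Your degree bookkeeping (for which $r$ the power $L^{n-k+1+r}$ annihilates $P^{k+1-2r}$ versus acts injectively on it) is handled correctly.
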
 
	\begin{lemma}\label{L10}
		If $\a\in\Om_{0}^{k}$, then
		$$\|d\a\|^{2}_{L^{2}(X)}+\|d^{\La}\a\|^{2}_{L^{2}(X)}\approx\sum_{r\geq0}\|d\b_{k-2r}\|^{2}_{L^{2}(X)}.$$
	\end{lemma}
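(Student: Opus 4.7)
The plan is to use the fact that $\D_{d+d^{\La}} := d^{\ast}d + d^{\La\ast}d^{\La}$ commutes with both $L$ and $\La$ (the commutator identities already invoked in the proof of Proposition \ref{P7}) to separate the contributions of the primitive pieces $\b_{k-2r}$, and then to handle each primitive piece using the structure of $d,d^{\La}$ on primitive forms supplied by the preceding lemma.

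\emph{Step 1 (reduction to primitive pieces).} Since $\a\in\Om^{k}_{0}$ is compactly supported, integration by parts gives
\[
\|d\a\|^{2}_{L^{2}(X)}+\|d^{\La}\a\|^{2}_{L^{2}(X)}=\langle\D_{d+d^{\La}}\a,\a\rangle_{L^{2}(X)}.
\]
Write $\a=\sum_{r}L^{r}\b_{k-2r}$ with $\b_{k-2r}\in P^{k-2r}$. Because $[\D_{d+d^{\La}},\La]=0$, the form $\D_{d+d^{\La}}\b_{k-2r}$ stays primitive, and because $[\D_{d+d^{\La}},L]=0$ we have $\D_{d+d^{\La}}(L^{r}\b_{k-2r})=L^{r}\D_{d+d^{\La}}\b_{k-2r}$. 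The pointwise Lefschetz decomposition is orthogonal (for $i\neq j$ and primitive $B_{k},A_{k'}$, iterating the Huybrechts identity $[L^{i},\La]=i(k-n+i-1)L^{i-1}$ reduces $\langle L^{i}B_{k},L^{j}A_{k'}\rangle$ to an inner product involving $\La$ applied to a primitive form, which vanishes), so the cross terms drop out and
\[
\langle\D_{d+d^{\La}}\a,\a\rangle_{L^{2}(X)}=\sum_{r}\langle L^{r}\D_{d+d^{\La}}\b_{k-2r},L^{r}\b_{k-2r}\rangle_{L^{2}(X)}.
\]
Polarizing Lemma \ref{L11} (setting $j=i=r$) gives $\langle L^{r}\gamma,L^{r}\delta\rangle=c(n,k,r)\langle\gamma,\delta\rangle$ for primitive $\gamma,\delta\in P^{k-2r}$ with $c(n,k,r)>0$. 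Hence
\[
\|d\a\|^{2}+\|d^{\La}\a\|^{2}=\sum_{r}c(n,k,r)\bigl(\|d\b_{k-2r}\|^{2}+\|d^{\La}\b_{k-2r}\|^{2}\bigr).
\]

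\emph{Step 2 (equivalence on primitive forms).} By the preceding lemma, $d\b_{k-2r}=B^{0}_{r}+LB^{1}_{r}$ with $B^{0}_{r},B^{1}_{r}$ primitive, and $d^{\La}\b_{k-2r}=-(n-k+2r+1)B^{1}_{r}$. Since $B^{0}_{r}$ and $LB^{1}_{r}$ lie in distinct Lefschetz components of $\Om^{k-2r+1}$, Step~1's orthogonality and Lemma \ref{L11} yield
\[
\|d\b_{k-2r}\|^{2}=\|B^{0}_{r}\|^{2}+(n-k+2r+1)\|B^{1}_{r}\|^{2},\qquad\|d^{\La}\b_{k-2r}\|^{2}=(n-k+2r+1)^{2}\|B^{1}_{r}\|^{2}.
\]
Thus $\|d\b_{k-2r}\|^{2}\leq\|d\b_{k-2r}\|^{2}+\|d^{\La}\b_{k-2r}\|^{2}\leq(n-k+2r+2)\|d\b_{k-2r}\|^{2}$. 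Because $r$ runs over the finite set $\{\max(0,k-n),\dots,\lfloor k/2\rfloor\}$, both $c(n,k,r)$ and these pinching constants are uniformly bounded from above and below by positive constants depending only on $n,k$. Combining Steps~1 and~2 yields the asserted equivalence $\|d\a\|^{2}_{L^{2}(X)}+\|d^{\La}\a\|^{2}_{L^{2}(X)}\approx\sum_{r\geq 0}\|d\b_{k-2r}\|^{2}_{L^{2}(X)}$.

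The only nontrivial technical point is the $L^{2}$-orthogonality of the Lefschetz decomposition together with the invariance of this decomposition under $\D_{d+d^{\La}}$. Both reduce to pointwise $\mathfrak{sl}_{2}$-identities plus the Tseng--Yau commutator relations already used in Proposition \ref{P7}, so no analytic subtleties beyond compact support of $\a$ are needed; everything else is combinatorial bookkeeping with Lemma \ref{L11} and the explicit form of $d,d^{\La}$ on primitive forms.
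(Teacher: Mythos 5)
Your proposal is correct and follows essentially the same route as the paper: both reduce to $\langle\D_{d+d^{\La}}\a,\a\rangle$, use the commutation of $\D_{d+d^{\La}}$ with $L$ and $\La$ plus Lemma \ref{L11} to kill the cross terms and rescale the diagonal ones, and then compare $\|d\b_{k-2r}\|^{2}+\|d^{\La}\b_{k-2r}\|^{2}$ with $\|d\b_{k-2r}\|^{2}$. Your Step 2 is in fact more complete than the paper's, which asserts the final equivalence with a bare $\approx$; your use of the Tseng--Yau structure of $d,d^{\La}$ on primitive forms to get the explicit pinching constant $(n-k+2r+2)$ is exactly the missing justification.
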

	\begin{proof}
		We denote  $\a_{k}=\sum_{r\geq0}L^{r}\b_{k-2r}$, 
		where $\b_{k-2r}\in P^{k-2r}$. For convenience, we denote $\b_{k-2r}\equiv0$ for all $r>k/2$. By the operator $\mathcal{D}_{d+d^{\La}}$ communicates with $L$, we have
		\begin{equation}\label{E13}
		\begin{split}
		\|d\a\|^{2}_{L^{2}(X)}+\|d^{\La}\a\|^{2}_{L^{2}(X)}&=\langle\mathcal{D}_{d+d^{\La}}\a_{k},\a_{k}\rangle_{L^{2}(X)}\\
		&=\langle\sum_{r\geq0}L^{r}\mathcal{D}_{d+d^{\La}}\b_{k-2r},\sum_{r\geq0}L^{r}\b_{k-2r}\rangle_{L^{2}(X)}\\
		&=\sum_{p=q}\langle L^{p}\mathcal{D}_{d+d^{\La}}\b_{k-2p},L^{q}\b_{k-2q}\rangle_{L^{2}(X)}\\
		&+\sum_{p\neq q}\langle L^{p}\mathcal{D}_{d+d^{\La}}\b_{k-2p},L^{q}\b_{k-2q}\rangle_{L^{2}(X)}\\
		\end{split}
		\end{equation}
		The first term of the right hand in (\ref{E13}) satisfies
		\begin{equation*}
		\begin{split}
		\sum_{p=q}\langle L^{p}\mathcal{D}_{d+d^{\La}}\b_{k-2p},L^{q}\b_{k-2q}\rangle_{L^{2}(X)}&\approx\sum_{r\geq0}\langle \mathcal{D}_{d+d^{\La}}\b_{k-2r},\b_{k-2r}\rangle_{L^{2}(X)}\\
		&=\sum_{r\geq0}\|d\b_{k-2r}\|^{2}_{L^{2}(X)}+\|d^{\La}\b_{k-2r}\|^{2}_{L^{2}(X)}\\
		&\approx\sum_{r\geq0}\|d\b_{k-2r}\|^{2}_{L^{2}(X)}\\
		\end{split}
		\end{equation*}
		Nothing that the operator $\mathcal{D}_{d+d^{\La}}$ communicates with $\La$, see \cite[Lemma 3.7]{Tseng-Yau}. Then $\mathcal{D}_{d+d^{\La}}\b_{k-2p}\in P^{k-2p}$. For any $p\neq q$, following the Lemma \ref{L11}, we observe that
		\begin{equation*}
		\langle L^{p}\mathcal{D}_{d+d^{\La}}\b_{k-2p},L^{q}\b_{k-2q}\rangle_{L^{2}(X)}=0.
		\end{equation*}
		Therefore the second term of the right hand in (\ref{E13}) is zero.
	\end{proof}
	\begin{corollary}\label{P2}
		If $\a\in\Om_{J}^{p,q}$, $(p+q\leq n)$, we denote $\a=\sum_{r\geq0}\b_{k-2r}$, where $\b_{k-2r}\in P^{p-r,q-r}_{J}$, then
		$$\|d\a\|^{2}_{L^{2}(X)}+\|d^{\ast}\a\|^{2}_{L^{2}(X)}\approx\sum_{r\geq0}\|d\b_{k-2r}\|^{2}_{L^{2}(X)}.$$
		In particular,  we have a decomposition for the space of the $L^{2}$-harmonic forms of bi-degree $(p,q)$:
		$$\mathcal{H}^{p,q}_{(2);J}(X)=\bigoplus_{r\geq0} L^{r}P\mathcal{H}^{p-r,q-r}_{(2);J}(X).$$
	\end{corollary}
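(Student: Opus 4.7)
The strategy is to combine Lemma \ref{L8} (which identifies $\|d^{\ast}\cdot\|$ with $\|d^{\La}\cdot\|$ on pure bi-degree forms) with Lemma \ref{L10} (the Lefschetz orthogonality of the $d$, $d^{\La}$ energy). Since $\a\in\Om^{p,q}_{J}$ is of pure bi-degree and each primitive component $\b_{k-2r}\in P^{p-r,q-r}_{J}$ is also of pure bi-degree, Lemma \ref{L8} applies both to $\a$ and to every $\b_{k-2r}$, yielding
$$\|d\a\|^{2}_{L^{2}(X)}+\|d^{\ast}\a\|^{2}_{L^{2}(X)}=\|d\a\|^{2}_{L^{2}(X)}+\|d^{\La}\a\|^{2}_{L^{2}(X)}.$$
Thus the first assertion reduces to Lemma \ref{L10}.

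The second step is to extend Lemma \ref{L10} — stated for $\a\in\Om^{k}_{0}$ — to the present setting $\a\in\Om^{p,q}_{J}$. Both sides of the claimed equivalence are interpreted as elements of $[0,\infty]$, and the argument of Lemma \ref{L10} is essentially algebraic: the key input is that $\mathcal{D}_{d+d^{\La}}$ commutes with $L$ and with $\La$, so $L^{p}\mathcal{D}_{d+d^{\La}}\b_{k-2p}\perp L^{q}\b_{k-2q}$ whenever $p\neq q$ (by Lemma \ref{L11}), and the cross-terms vanish pointwise. The only place compact support is used is the integration-by-parts $\langle\mathcal{D}_{d+d^{\La}}\a,\a\rangle=\|d\a\|^{2}+\|d^{\La}\a\|^{2}$, which I would recover in the non-compact case by the same cutoff scheme $f_{\varepsilon}$ with $|df_{\varepsilon}|^{2}\leq\varepsilon f_{\varepsilon}$ used in Lemma \ref{L3}, taking $\varepsilon\to 0$ and invoking Schwartz.

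For the harmonic-form decomposition, let $\a\in\mathcal{H}^{p,q}_{(2);J}(X)$. Then $d\a=0=d^{\ast}\a$, and the norm equivalence just proved forces $d\b_{k-2r}=0$ for every $r$. Using the structural formula $dB_{k-2r}=B^{0}_{k-2r+1}+LB^{1}_{k-2r-1}$ with $B^{0},B^{1}$ primitive (from the Tseng--Yau lemma cited in the excerpt), the uniqueness of the primitive decomposition $\Om^{\ast}(X)=\bigoplus_{r}L^{r}P^{\ast-2r}$ gives $B^{0}=0=B^{1}$; hence $d^{\La}\b_{k-2r}=-(n-k+2r+1)B^{1}=0$, and by Lemma \ref{L8} also $d^{\ast}\b_{k-2r}=0$. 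Therefore $\b_{k-2r}\in P\mathcal{H}^{p-r,q-r}_{(2);J}(X)$, giving the inclusion $\subset$. The reverse inclusion follows exactly as in Proposition \ref{P7}, since each $L^{r}\b_{k-2r}$ with $\b_{k-2r}$ a primitive harmonic $(p-r,q-r)$-form is automatically in $\Om^{p,q}_{(2);J}$ (Weil's formula (\ref{E5}) makes $L^{r}$ a pointwise quasi-isometry on primitive strata) and is $\De_{d}$-harmonic.

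The main obstacle I anticipate is the rigorous extension of Lemma \ref{L10} to non-compactly supported $\a$; the cleanest route is the cutoff argument of Lemma \ref{L3}, where one verifies that the error terms produced by $df_{\varepsilon}$ vanish in the limit provided each $L^{r}\b_{k-2r}$ is separately in $L^{2}$. This last $L^{2}$-integrability of the components from that of $\a$ follows from the pointwise orthogonality of distinct Lefschetz strata (again Lemma \ref{L11} at a point), which gives $|\a|^{2}\gtrsim\sum_{r}|L^{r}\b_{k-2r}|^{2}$ with constants depending only on $n,k$.
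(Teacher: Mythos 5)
Your proposal follows the paper's own route exactly: the displayed norm equivalence is obtained by combining Lemma \ref{L8} (which converts $\|d^{\ast}\a\|$ into $\|d^{\La}\a\|$ on pure $(p,q)$-forms) with Lemma \ref{L10}, and the harmonic decomposition is just the $k\leq n$ case of Proposition \ref{P7}. The additional care you take in extending Lemma \ref{L10} beyond compactly supported forms via the cutoff scheme is a point the paper glosses over (it only ever applies the corollary to $\a\in\Om_{J}^{p,q}\cap\Om^{k}_{0}$ in Theorem \ref{T3}), but it does not change the substance of the argument.
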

	\begin{proof}
		The conclusions follow from Lemma \ref{L8} and \ref{L10}.
	\end{proof}
	Now we can give a lower bound on the spectra of the Laplace operator $\De_{d}:=dd^{\ast}+d^{\ast}d$ on $L^{2}$-forms $\Om_{J}^{p,q}(X)$ for $p+q\neq n$ to sharpen the vanishing theorem in $d$(bounded) case. 
	\begin{theorem}\label{T3}
		Let $(X^{2n},J,\w)$ be a complete almost K\"{a}hler  manifold with a $d$(bounded) symplectic form $\w$, i.e., there exists a bounded $1$-form $\theta$ such that $\w=d\theta$. Then any $\a\in \Om_{J}^{p,q}(X)\cap\Om^{k}_{0}(X)$ on $X$ of degree $k:=p+q\neq n$ satisfies the inequality
		\begin{equation*}
		c_{n,k}\|\theta\|^{-2}_{L_{\infty}}\|\a\|^{2}_{L^{2}(X)}\leq\|d\a\|^{2}_{L^{2}(X)}+\|d^{\ast}\a\|^{2}_{L^{2}(X)},
		\end{equation*}
		where $c_{n,k}>0$ is a constant which depends only on $n,k$. In particular, 
		$$\mathcal{H}^{p,q}_{(2);J}(X)=\{0\}$$
		 unless $k:=p+q=n$.
	\end{theorem}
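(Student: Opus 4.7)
The plan is to reduce the full inequality to the primitive-form inequality already proved in Proposition \ref{P1}, using the $L^{2}$-Lefschetz decomposition of Corollary \ref{P2}, and then to upgrade the compactly supported inequality to the vanishing of harmonic forms by a cutoff argument.

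Assume first $k < n$. Given $\a \in \Om^{p,q}_J \cap \Om^k_0$, write its Lefschetz decomposition $\a = \sum_{r \geq 0} L^r \b_{k-2r}$ with $\b_{k-2r} \in P^{p-r,q-r}_J \cap \Om^{k-2r}_0$. Every $\b_{k-2r}$ has degree $k - 2r \leq k < n$, so Proposition \ref{P1} applied componentwise yields the primitive inequalities $c_{n,k-2r}\|\theta\|_{L_\infty}^{-2}\|\b_{k-2r}\|^2 \leq \|d\b_{k-2r}\|^2$. The Lefschetz decomposition is pointwise (hence globally) $L^2$-orthogonal and $L^r$ is a quasi-isometry on primitive forms by Lemma \ref{L11} with $j = i$, so $\|\a\|^2 \approx \sum_r \|\b_{k-2r}\|^2$. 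Concatenating this with the equivalence $\|d\a\|^2 + \|d^\ast\a\|^2 \approx \sum_r \|d\b_{k-2r}\|^2$ of Corollary \ref{P2} produces the desired inequality. The case $k > n$ is handled by Hodge duality: the Weil formula \eqref{E5} (composed with $\mathcal{J}$) shows the Riemannian Hodge star sends $\Om^{p,q}_J \cap \Om^k_0$ isometrically onto $\Om^{n-q, n-p}_J \cap \Om^{2n-k}_0$ and intertwines $d$ with $\pm d^\ast$, so applying the already-proven inequality to $\ast\a$ (which has degree $2n - k < n$) recovers the inequality for $\a$.

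For the vanishing, let $\a \in \mathcal{H}^{p,q}_{(2); J}(X)$ with $k \neq n$, and pick Gromov-type cutoffs $f_\varepsilon$ with $0 \leq f_\varepsilon \leq 1$, $f_\varepsilon \to 1$ pointwise, and $\|df_\varepsilon\|_{L_\infty} \leq \varepsilon$. The forms $f_\varepsilon \a$ lie in $\Om^{p,q}_J \cap \Om^k_0$, so by the inequality established above
\begin{equation*}
c_{n,k}\|\theta\|_{L_\infty}^{-2}\|f_\varepsilon \a\|_{L^2}^2 \leq \|d(f_\varepsilon \a)\|_{L^2}^2 + \|d^\ast(f_\varepsilon \a)\|_{L^2}^2,
\end{equation*}
and since $d\a = 0 = d^\ast\a$, a Leibniz expansion bounds the right-hand side by $C\varepsilon^2 \|\a\|_{L^2}^2$. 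The left-hand side converges to $c_{n,k}\|\theta\|_{L_\infty}^{-2}\|\a\|_{L^2}^2$ by dominated convergence, so letting $\varepsilon \to 0$ forces $\a = 0$.

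The most delicate technical points are the two Hodge-type facts needed in the non-integrable almost K\"ahler setting: the global $L^2$-orthogonality of the Lefschetz decomposition, and the bi-degree shift $\ast: \Om^{p,q}_J \to \Om^{n-q, n-p}_J$. Both are pointwise linear-algebra statements on each tangent space that follow directly from the Weil formula \eqref{E5} and the $\mathcal{J}$-projector machinery already assembled in Section 3, rather than from any genuine complex structure on $X$.
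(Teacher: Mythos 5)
Your proposal is correct and follows essentially the same route as the paper: Lefschetz-decompose $\a$ into primitive pieces, apply Proposition \ref{P1} componentwise, recombine via Lemma \ref{L11} and Corollary \ref{P2}, and dispose of $k>n$ by Hodge-star duality. The only addition is your explicit Gromov-cutoff argument deducing the vanishing of $\mathcal{H}^{p,q}_{(2);J}(X)$ from the compactly supported inequality, a standard step the paper leaves implicit (the vanishing there also already follows from Theorem \ref{T4}).
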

	\begin{proof}
		We only need consider $k<n$ case. The case $k>n$ follows by the Poincar\'{e} duality as the operator $\ast:\Om^{k}\rightarrow\Om^{2n-k}$ commutes with $\De_{d}$ and is isometric for the $L^{2}$-norms. Now we denote $\a=\sum_{r\geq0}L^{r}\b_{k-2r}$, where $\b_{k-2r}\in P_{J}^{i-r,j-r}$. We then have
		$$\|\a\|^{2}_{L^{2}(X)}=\sum_{r\geq0}\|L^{r}\b_{k-2r}\|^{2}_{L^{2}(X)}\lesssim\sum_{r\geq0}\|\b_{k-2r}\|^{2}_{L^{2}(X)}.$$ 
		Following Proposition \ref{P1}, it implies that
		$$\|\b_{k-2r}\|_{L^{2}(X)}\|\theta\|^{-1}_{L_{\infty}}\lesssim\|d\b_{k-2r}\|_{L^{2}(X)}.$$ 
		Following Corollary \ref{P2}, it implies that
		\begin{equation}\nonumber
		\begin{split}
		\|\a\|^{2}_{L^{2}(X)}&\lesssim\|\theta\|^{2}_{L_{\infty}}\sum_{r\geq0}\|d\b_{k-2r}\|^{2}_{L^{2}(X)}\\
		&\lesssim\|\theta\|^{2}_{L_{\infty}}(\|d\a\|^{2}_{L^{2}(X)}+\|d^{\ast}\a\|^{2}_{L^{2}(X)}).
		\end{split}
		\end{equation}
		We complete the proof.
	\end{proof}
	We can obtain a well-known result proved by Gromov, see \cite[1.4.A. Theorem]{Gro} or \cite{Pansu}.
	\begin{corollary}\label{C1}
		Let $(X^{2n},\w)$ be a complete K\"{a}hler  manifold with a $d$(bounded) K\"{a}hler form $\w$, i.e., there exists a bounded $1$-form $\theta$ such that $\w=d\theta$. Then any $\a\in\Om^{k}_{0}(X)$ satisfies the inequality
		\begin{equation*}
		c_{k}\|\theta\|^{-2}_{L_{\infty}}\|\a\|^{2}_{L^{2}(X)}\leq\|d\a\|^{2}_{L^{2}(X)}+\|d^{\ast}\a\|^{2}_{L^{2}(X)},
		\end{equation*}
		where $c_{k}>0$ is a constant which depends only on $n,k$. In particular, $$\mathcal{H}^{k}_{(2)}(X)=\{0\}$$
		unless $k=n$.
	\end{corollary}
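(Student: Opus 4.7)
The plan is to reduce this K\"ahler statement to the already-established almost K\"ahler Theorem \ref{T3} using the fact that, in the integrable case, the $(p,q)$-type decomposition is orthogonal with respect to both the $L^{2}$ inner product and the quadratic form $\a\mapsto\|d\a\|^{2}+\|d^{\ast}\a\|^{2}$. That orthogonality is exactly the content of Lemma \ref{L2}, which is the only place where the assumption that $J$ is integrable enters.

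Concretely, I would start with $\a\in\Om^{k}_{0}(X)$ and write its bi-degree decomposition $\a=\sum_{p+q=k}\a_{p,q}$ with $\a_{p,q}\in\Om^{p,q}_{0}(X)$ (this is pointwise, so compact support is preserved). Since $k\neq n$, every pair $(p,q)$ with $p+q=k$ satisfies $p+q\neq n$, so Theorem \ref{T3} applies to each component and yields
\begin{equation*}
c_{n,k}\,\|\theta\|^{-2}_{L_{\infty}}\|\a_{p,q}\|^{2}_{L^{2}(X)}\leq\|d\a_{p,q}\|^{2}_{L^{2}(X)}+\|d^{\ast}\a_{p,q}\|^{2}_{L^{2}(X)}.
\end{equation*}
Summing over all pairs $(p,q)$ with $p+q=k$ and using the pointwise orthogonality of distinct bi-types (which gives $\|\a\|^{2}_{L^{2}(X)}=\sum_{p+q=k}\|\a_{p,q}\|^{2}_{L^{2}(X)}$) together with Lemma \ref{L2} on the right, I obtain the desired inequality with $c_{k}:=c_{n,k}$.

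For the vanishing conclusion, I would invoke the $L^{2}$-decomposition displayed right after Theorem \ref{T1}, namely $\mathcal{H}^{k}_{(2)}(X)=\bigoplus_{p+q=k}\mathcal{H}^{p,q}_{(2);J}(X)$, which also comes from Lemma \ref{L2} applied to $\De_{d}$ (any $\De_{d}$-harmonic form splits into $\De_{d}$-harmonic components of pure bi-type because $\De_{d}=2\De_{\bar\pa}$ preserves bi-degree). Since $k\neq n$ forces $p+q\neq n$ for each summand, Theorem \ref{T3} gives $\mathcal{H}^{p,q}_{(2);J}(X)=\{0\}$ and hence $\mathcal{H}^{k}_{(2)}(X)=\{0\}$. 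No serious obstacle is expected: the statement is a direct corollary, and the only real content is keeping track of the integrability hypothesis, which is used precisely to justify the orthogonal bi-type decomposition in Lemma \ref{L2}; everything else is already packaged in Theorem \ref{T3}.
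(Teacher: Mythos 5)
Your proposal is correct and follows essentially the same route as the paper: the paper's proof of Corollary \ref{C1} likewise combines Lemma \ref{L2} (orthogonality of the bi-type splitting for both $\|\cdot\|^{2}_{L^{2}}$ and $\|d\cdot\|^{2}+\|d^{\ast}\cdot\|^{2}$) with Theorem \ref{T3} applied to each component $\a_{p,q}$, taking the minimum of the componentwise constants. The only cosmetic difference is that the paper writes the constant as $\min_{p,q}c_{n,p,q}$ rather than a single $c_{n,k}$, which does not affect the argument.
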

	\begin{proof}
		Following Lemma \ref{L2} and Theorem \ref{T3}, we have
		\begin{equation*}
		\begin{split}
		\|d\a\|^{2}+\|d^{\ast}\a\|^{2}:&=\sum_{p,q}(\|d\a_{p,q}\|^{2}+\|d^{\ast}\a_{p,q}\|^{2})\\
		&\geq \sum_{p,q}c_{n,p,q}\|\theta\|^{-2}_{L_{\infty}}\|\a_{p,q}\|^{2}_{L^{2}(X)}\\
		&\geq \min_{p,q}c_{n,p,q}\|\theta\|^{-2}_{L_{\infty}}\sum_{p,q}\|\a_{p,q}\|^{2}_{L^{2}(X)}\\
		&\geq \min_{p,q}c_{n,p,q}\|\theta\|^{-2}_{L_{\infty}}\|\a\|^{2}_{L^{2}(X)}.\\
		\end{split}
		\end{equation*}
		Here we use that fact that $\|\a\|^{2}_{L^{2}(X)}=\sum_{p,q}\|\a_{p,q}\|^{2}_{L^{2}(X)}$.
	\end{proof}
\subsection{$L^{2}$-decomposition for almost K\"{a}hler manifolds }
Let $(X,J,\w)$ be a $2n$-dimensional almost K\"{a}hler manifold. Then $J$ acts as an involution on the space of smooth $2$-forms $\Om^{2}(X)$:  given $\a\in\Om^{2}(X)$, for every pair of vector fields $u,v$ on $X$
$$J\a(u,v)=\a(Ju,Jv).$$
Therefore the space $\Om^{2}(X)$ splits as the direct sum of $\pm1$-eigenspaces $\Om^{\pm}$, i.e., $\Om^{2}(X)=\Om^{+}(X)\oplus\Om^{-}(X)$. Let us denote by $\mathcal{Z}^{2}_{(2)}(X)$ the space of closed $2$-forms which are in $L^{2}$ and set
$$\mathcal{Z}^{\pm}_{(2);J}=\mathcal{Z}^{2}_{(2)}(X)\cap\Om^{\pm}_{J}.$$
Define 
$$H^{\pm}_{(2);J}:=\{\mathfrak{a}\in H^{2}_{(2)}(X):\exists\a\in\mathcal{Z}^{\pm}_{(2);J}\ such\ that\ \mathfrak{a}=[\a]\}.$$
For closed almost complex $4$-manifolds Dr\v{a}ghici-Li-Zhang showed in \cite{DLZ} that there is a direct sum decomposition
$$H^{2}_{dR}(X;\mathbb{R})=H^{+}(X)\oplus H^{-}(X).$$
In \cite{HT}, Hind-Tomassini generalized such a decomposition to the $L^{2}$ setting.
\begin{theorem}\cite[Theorem 4.8]{HT}
	Let $(X,J,\w)$ be a complete almost K\"{a}hler $4$-dimensional manifold. Then, we have the following decomposition
	$$H_{(2)}^{2}(X)=\overline{H^{+}_{(2);J}(X)}\oplus\overline{H^{-}_{(2);J}(X)}.$$
\end{theorem}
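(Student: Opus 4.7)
The plan is to identify $H^2_{(2)}(X)$ with the space $\mathcal{H}^2_{(2)}(X)$ of $L^2$-harmonic $2$-forms via the Hodge-de Rham-Kodaira orthogonal decomposition recalled in Section 2, and then to combine the Riemannian self-dual/anti-self-dual splitting with the fiberwise almost-complex type decomposition that is special to dimension four.

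First, since the Hodge star $\ast$ is an involution on $\Omega^2$ in dimension four and commutes with $\Delta_d$, we obtain the orthogonal splitting
$$\mathcal{H}^2_{(2)}(X)=\mathcal{H}^+_g(X)\oplus\mathcal{H}^-_g(X),$$
where $\mathcal{H}^\pm_g$ denote the $\pm 1$-eigenspaces of $\ast$ inside $\mathcal{H}^2_{(2)}(X)$. Pointwise on any almost Hermitian $4$-manifold, a direct application of the Weil formula (\ref{E5}) yields
$$\Omega^-_g=\{\textrm{primitive } (1,1)\textrm{-forms}\}\subset\Omega^+_J,\qquad \Omega^+_g=\mathbb{R}\omega\oplus\Omega^-_J.$$
Consequently every $\delta\in\mathcal{H}^-_g(X)$ is a closed $L^2$ form lying in $\Omega^+_J$, so $[\delta]\in H^+_{(2);J}(X)$, and $\mathcal{H}^-_g(X)$ embeds naturally into $H^+_{(2);J}(X)$.

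The crucial step is the self-dual part. Any $\eta\in\mathcal{H}^+_g(X)$ admits a unique pointwise decomposition $\eta=f\omega+\gamma$ with $f\in L^2_{\textrm{loc}}(X)$ and $\gamma\in\Omega^-_J\cap L^2$. Harmonicity of $\eta$ together with $d\omega=0$ forces $d\gamma=-df\wedge\omega$. The goal is to produce closed forms $\alpha^+\in\mathcal{Z}^+_{(2);J}$ and $\alpha^-\in\mathcal{Z}^-_{(2);J}$ with $\eta-(\alpha^++\alpha^-)\in\overline{d\Omega^1_{(2)}(X)}$, or more precisely to realize $[\eta]$ as an $L^2$-limit of classes from $H^+_{(2);J}(X)+H^-_{(2);J}(X)$. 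I would attack this by solving a Poisson-type problem for the scalar $f$ on the complete Riemannian manifold $(X,g)$, via the $L^2$-Hodge decomposition on $\Omega^1_{(2)}(X)$, so as to produce a $1$-form $\beta$ whose $\Omega^-_J$-type contribution matches $\gamma$ up to a closed $J$-anti-invariant error; cut-off arguments in the spirit of Lemma \ref{L3}, together with Lemma \ref{L9} to justify the necessary integrations by parts, would provide the $L^2$-control on the correction terms.

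Finally, for directness I would exploit that $\Omega^+_J$ and $\Omega^-_J$ are $L^2$-orthogonal as $\pm 1$-eigenspaces of the orthogonal involution induced by $J$; this orthogonality passes to the unique harmonic representatives under $H^2_{(2)}(X)\cong\mathcal{H}^2_{(2)}(X)$, so the closures $\overline{H^+_{(2);J}(X)}$ and $\overline{H^-_{(2);J}(X)}$ can meet only in the zero class. The main obstacle will be the approximation step for the self-dual part: in the compact Dr\v{a}ghici-Li-Zhang setting the analogous splitting is essentially linear-algebraic, but in the complete non-compact $L^2$ setting the non-integrability of $J$ obstructs a clean bidegree decomposition of $d$, and one must use completeness together with carefully weighted estimates to keep all correction terms in $L^2$ without altering the class in the reduced cohomology.
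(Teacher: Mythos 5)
The paper does not actually prove this statement: it is quoted verbatim from Hind--Tomassini \cite[Theorem~4.8]{HT}, so there is no internal proof to compare against and your argument has to stand on its own. Its skeleton is sound as far as it goes. The pointwise type algebra in real dimension four is correct ($\Omega^{-}_{g}$ is the space of primitive $(1,1)$-forms, hence $J$-invariant, and $\Omega^{+}_{g}=\mathbb{R}\omega\oplus\Omega^{-}_{J}$), so anti-self-dual $L^{2}$-harmonic forms are closed $J$-invariant forms and do give classes in $H^{+}_{(2);J}(X)$. The directness claim is also essentially right, but not for the reason you state: the harmonic representative of a class in $H^{+}_{(2);J}(X)$ need not itself be $J$-invariant, so pointwise orthogonality does not simply ``pass to harmonic representatives.'' It is rescued by the fact that every closed $J$-anti-invariant $L^{2}$-form $\beta$ is self-dual, hence already harmonic, so that $\langle\mathcal{P}\alpha,\beta\rangle=\langle\alpha,\mathcal{P}\beta\rangle=\langle\alpha,\beta\rangle=0$ for every closed $J$-invariant $\alpha$, where $\mathcal{P}$ denotes the (self-adjoint) harmonic projection.

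The genuine gap is exactly the step you flag as crucial and then leave unexecuted: showing that a self-dual $L^{2}$-harmonic form $\eta=f\omega+\gamma$, with $\gamma\in\Omega^{-}_{J}$, has reduced class in $\overline{H^{+}_{(2);J}(X)}\oplus\overline{H^{-}_{(2);J}(X)}$. Closedness of $\eta$ gives only $df\wedge\omega+d\gamma=0$; neither summand is closed by itself, and your plan to correct this by ``solving a Poisson-type problem for $f$'' is not an argument. You do not write down the equation to be solved; on a complete non-compact manifold the relevant Laplacians typically have $0$ in their essential spectrum, so Poisson equations with $L^{2}$ right-hand side are in general not solvable in $L^{2}$; and even granting a solution you would still have to check that the resulting correction lies in $\overline{d\Omega^{1}_{(2)}(X)}$ so that the reduced class is unchanged. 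This is precisely the point where the compact Dr\v{a}ghici--Li--Zhang argument \cite{DLZ} does not transfer and where all of the analytic content of \cite[Theorem~4.8]{HT} lives, so as written the proposal does not prove the theorem.
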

Given any J-anti-invariant form $\a$ on a 2n-dimensional almost Hermitian
manifold $X$, we have 
$$\ast\a^{-}=\frac{1}{(n-2)!}\a^{-}\wedge\w^{n-2}.$$
We  then have
\begin{proposition}(\cite[Corollary 4.1]{HT})
Closed anti-invariant forms are harmonic, that is, we have an inclusion $\mathcal{Z}^{-}_{(2);J}\hookrightarrow\mathcal{H}^{2}_{(2)}$.
\end{proposition}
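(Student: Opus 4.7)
The plan is to verify directly that a closed, $L^{2}$, anti-invariant $2$-form $\alpha$ is co-closed, since in that case $d\alpha=0$ together with $d^{\ast}\alpha=0$ immediately gives $\Delta_{d}\alpha=0$ and hence $\alpha\in\mathcal{H}^{2}_{(2)}(X)$. The essential tool is the star-identity displayed just before the proposition,
\begin{equation*}
\ast\alpha=\frac{1}{(n-2)!}\,\alpha\wedge\w^{n-2},
\end{equation*}
valid for any $J$-anti-invariant $2$-form on a $2n$-dimensional almost Hermitian manifold (and reducing to $\ast\alpha=\alpha$ when $n=2$, so the argument uniformly covers all $n\geq 2$).

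First I would take the exterior differential of both sides of this identity. Since $\alpha$ is closed by hypothesis and $\w$ is closed because the structure is almost K\"ahler, one has $d\alpha=0$ and $d\w^{n-2}=(n-2)\w^{n-3}\wedge d\w=0$, so
\begin{equation*}
d(\ast\alpha)=\frac{1}{(n-2)!}\bigl(d\alpha\wedge\w^{n-2}+\alpha\wedge d\w^{n-2}\bigr)=0.
\end{equation*}
Using the standard relation $d^{\ast}=\pm\ast d\ast$ on forms, this is exactly $d^{\ast}\alpha=0$. Combined with the assumption $d\alpha=0$, one obtains $\Delta_{d}\alpha=(dd^{\ast}+d^{\ast}d)\alpha=0$, and since $\alpha$ is in $L^{2}$ by definition of $\mathcal{Z}^{-}_{(2);J}$, this places $\alpha$ in $\mathcal{H}^{2}_{(2)}(X)$, giving the claimed inclusion.

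There is essentially no hard step here; the whole content is the pointwise star identity, whose derivation is done in \cite{HT}. The one point that deserves attention is the dimensional edge case $n=2$, where $\w^{n-2}=1$ and the formula degenerates to $\ast\alpha=\alpha$: the argument still goes through because then $d(\ast\alpha)=d\alpha=0$ directly. No cutoff/integration-by-parts argument is needed, because we are not trying to pass from a weak $L^{2}$ identity to a global one --- we are only verifying two pointwise differential equations on a smooth form.
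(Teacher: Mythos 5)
Your proof is correct and is essentially the argument the paper intends: the paper states the pointwise identity $\ast\a^{-}=\frac{1}{(n-2)!}\,\a^{-}\wedge\w^{n-2}$ immediately before the proposition and otherwise defers to \cite[Corollary 4.1]{HT}, and your one-line differentiation of that identity (using $d\w=0$ and $d^{\ast}=-\ast d\ast$ in even dimension) is exactly the routine completion, with no cutoff argument needed since $d\a=0$ and $d^{\ast}\a=0$ are verified pointwise.
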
\label{P4}
Following the idea in Proposition \ref{P3}, we get
\begin{corollary}
Let $(X^{2n},J,\w)$ be a complete  $2n$-dimensional almost K\"{a}hler manifold with a $d$(sublinear) symplectic form $\w$, ($n\geq3$). Then $$H^{-}_{(2);J}(X)=\{0\}.$$
\end{corollary}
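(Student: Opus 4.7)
Let $[\a]\in H^{-}_{(2);J}(X)$ be represented by some $\a\in\mathcal{Z}^{-}_{(2);J}$, so that $\a$ is a closed $L^{2}$ anti-invariant $2$-form. By the preceding proposition, the inclusion $\mathcal{Z}^{-}_{(2);J}\hookrightarrow\mathcal{H}^{2}_{(2)}(X)$ holds, so $\a$ is in fact $\De_{d}$-harmonic, i.e.\ $d\a=0$ and $d^{\ast}\a=0$. My goal is to reproduce the cutoff argument of Proposition \ref{P3} to force $\a=0$; since this forces $\mathcal{Z}^{-}_{(2);J}=\{0\}$, it will yield $H^{-}_{(2);J}(X)=\{0\}$.

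First I use the Weil-type identity displayed just above the corollary,
\[
\ast\a=\frac{1}{(n-2)!}\,\a\wedge\w^{n-2}.
\]
The hypothesis $n\geq 3$ is used here to guarantee that $\w^{n-3}$ makes sense as a form of nonnegative degree. Using $d\w=0$ together with $\w=d\theta$, I can write $\w^{n-2}=d(\theta\wedge\w^{n-3})$. Combined with $d\a=0$, this gives
\[
\ast\a=\frac{1}{(n-2)!}\,d\bigl(\a\wedge\theta\wedge\w^{n-3}\bigr)=d\eta,\qquad \eta:=\frac{1}{(n-2)!}\,\a\wedge\theta\wedge\w^{n-3},
\]
and by the $d$(sublinear) bound on $\theta$ together with the boundedness of $\w$, the primitive $\eta$ satisfies $|\eta(x)|\lesssim(1+\rho(x,x_{0}))|\a(x)|$ pointwise.

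Next I apply the cutoff functions $f_{j}(x)=h(\rho(x_{0},x)-j)$ used in Proposition \ref{P3}. Since $f_{j}\ast\a$ has compact support and $\a$ is coclosed (so $d(\ast\a)=0$), integration by parts gives
\[
\langle\ast\a,f_{j}\ast\a\rangle_{L^{2}}=\langle d\eta,f_{j}\ast\a\rangle_{L^{2}}=\langle\eta,d^{\ast}(f_{j}\ast\a)\rangle_{L^{2}},
\]
and $d^{\ast}(f_{j}\ast\a)$ reduces, via $d(\ast\a)=0$, to a term supported on $B_{j+1}\setminus B_{j}$ whose pointwise norm is controlled by $|df_{j}|\,|\a|$. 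Combining with the sublinear bound on $|\eta|$,
\[
\bigl|\langle\ast\a,f_{j}\ast\a\rangle_{L^{2}}\bigr|\lesssim (j+1)\int_{B_{j+1}\setminus B_{j}}|\a|^{2}\,dvol.
\]
By the same subsequence argument as in Proposition \ref{P3}, the assumption $\a\in L^{2}$ forces the existence of $j_{i}\to\infty$ along which the right-hand side tends to $0$. Dominated convergence gives that the left-hand side tends to $\|\ast\a\|_{L^{2}}^{2}=\|\a\|_{L^{2}}^{2}$. Hence $\a=0$, and therefore $H^{-}_{(2);J}(X)=\{0\}$.

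\textbf{Main obstacle.} The only delicate point is keeping track of the signs and Hodge identities when replacing $d^{\ast}(f_{j}\ast\a)$ by an expression involving only $df_{j}\wedge\a$; this is identical in structure to the computation in the proof of Proposition \ref{P3} and requires only the two facts $d\a=0$ and $d(\ast\a)=0$, both of which hold because $\a$ is harmonic. The hypothesis $n\geq 3$ is essential, since the primitive $\theta\wedge\w^{n-3}$ of $\w^{n-2}$ exists only in this range; the case $n=2$ is genuinely different, consistent with the fact that nontrivial anti-invariant harmonic $L^{2}$-forms can exist on symplectic $4$-manifolds.
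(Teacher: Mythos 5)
Your proposal is correct and follows essentially the same route as the paper: it invokes the inclusion $\mathcal{Z}^{-}_{(2);J}\hookrightarrow\mathcal{H}^{2}_{(2)}$, writes $\ast\a^{-}$ as $d\eta$ with $\eta$ built from $\theta\wedge\w^{n-3}$ (whence the hypothesis $n\geq 3$), and runs the cutoff/subsequence argument of Proposition \ref{P3}. The only cosmetic difference is the normalizing constant in the Weil-type identity, which does not affect the argument.
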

\begin{proof}
We denote by $\a^{-}$ the harmonic anti-invariant form on $X$. Noticing that $\a^{-}$ could be a sum of terms of type $(2,0)$ and $(0,2)$, it implies that $\La_{\w}\a^{-}=0$, i.e., $\a\in \mathcal{H}^{2}_{(2);J}(X)\cap\ker{P}$. Even though $\mathcal{H}^{-}_{(2);J}(X)\subset\mathcal{H}^{2,0}_{(2);J}(X)\oplus\mathcal{H}^{0,2}_{(2);J}(X)$ does not hold, for any harmonic anti-invariant form $\a^{-}$ we have the identity 
	$$\ast\a^{-}=\frac{1}{(n-1)!}\a^{-}\wedge\w^{n-2}=\frac{1}{(n-1)!}d(\a^{-}\wedge\theta\wedge\w^{n-3}):=d\eta,$$
	where $$\eta=\frac{1}{(n-1)!}(\a^{-}\wedge\theta\wedge\w^{n-3}).$$
	We follow the method in Proposition \ref{P3} to choose the compactly supported function $$f_{j}(x)=h(\rho(x_{0},x)-j),$$ 
	where $j$ is positive integer. Noticing that $f_{j}\ast\a$ has compact support, one has
	\begin{equation*}
	\begin{split}
	\langle \ast\a^{-},f_{j}\ast\a^{-}\rangle_{L^{2}(X)}&=\langle d\eta,f_{j}\ast\a\rangle_{L^{2}(X)}\\
	&=\langle\theta\wedge\a^{-}\wedge\w^{n-3}, \ast(df_{j}\wedge\a^{-}\rangle_{L^{2}(X)}.\\
	\end{split}
	\end{equation*}
	Using the idea in Proposition \ref{P3}, we obtain that there exists a subsequence $\{j_{i}\}_{i\geq1}$ such that
	$$\langle\theta\wedge\a^{-}\wedge\w^{n-3}, \ast(df_{j_{i}}\wedge\a^{-}\rangle_{L^{2}(X)}\rightarrow 0,\ as\ i\rightarrow\infty.$$
	Therefore, we have
	$$
	0=\lim_{i\rightarrow\infty}\langle \ast\a^{-},f_{j_{i}}\ast\a^{-}\rangle_{L^{2}(X)}=\|\a^{-}\|^{2}_{L^{2}(X)},
	$$
	i.e., $\a^{-}=0$. We complete this proof.
\end{proof}
Let $\a^{+}_{J}$ be a $J$-self-dual-invariant form in $X^{2n}$. Then we can denote $\a^{+}_{J}=f\w+\a_{J,0}^{1,1}$, where $f\in\Om^{0}(X)$ and $\a_{J,0}^{1,1}\in P^{1,1}$.
\begin{corollary}
Let $(X^{2n},J,\w)$ be a complete $2n$-dimensional almost K\"{a}hler manifold with a $d$(bounded) symplectic form $\w$, ($n\geq3$). If $\a^{+}_{J}\in H^{+}_{(2);J}(X)$, then there is a positive constant $c_{n}=c(n)$ such that
\begin{equation*}
c_{n}\|\theta\|^{-2}_{L_{\infty}}\|\a^{+}_{J}\|^{2}_{L^{2}(X)}\leq\|df\|^{2}_{L^{2}(X)},
\end{equation*}
Furthermore, if $\a^{+}_{J}$ also satisfies $d^{\La}\a^{+}_{J}=0$, then
$\a^{+}_{J}=0$.
\end{corollary}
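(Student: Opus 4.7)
My approach is to adapt the cutoff techniques of Propositions \ref{P1} and \ref{P3} to the Lefschetz-decomposed form $\a^{+}_{J}=f\w+\a^{1,1}_{J,0}$ with $\a^{1,1}_{J,0}\in P^{1,1}$. The key step is to produce an identity of the form
$$\ast\a^{+}_{J}=d\eta-C_{n}\,df\wedge\theta\wedge\w^{n-2},$$
in which the primitive $\eta$ satisfies $|\eta|\lesssim\|\theta\|_{L_{\infty}}|\a^{+}_{J}|$ pointwise and the error term involves \emph{only} $df$ (not $\a^{+}_{J}$). Once this is in hand, pairing both sides with a Gromov cutoff of $\ast\a^{+}_{J}$ yields the inequality after standard estimates.

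To build the decomposition, I compute $\ast(f\w)=\frac{f}{(n-1)!}\w^{n-1}$ directly, and apply the Weil formula (\ref{E5}) to the primitive $(1,1)$-piece (on which $\mathcal{J}$ acts as the identity) to obtain $\ast\a^{1,1}_{J,0}=-\frac{1}{(n-2)!}\a^{1,1}_{J,0}\wedge\w^{n-2}$. Because $\w=d\theta$ and $n\geq3$, both $\w^{n-1}=d(\theta\wedge\w^{n-2})$ and $\w^{n-2}=d\theta\wedge\w^{n-3}$ arise from $\theta$; moreover the closedness $d\a^{+}_{J}=0$ gives the crucial relation $d\a^{1,1}_{J,0}=-df\wedge\w$, which lets me trade any unwanted $d\a^{1,1}_{J,0}$ for $df\wedge\w$. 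A direct Leibniz computation then yields the claimed identity with
$$\eta=\tfrac{1}{(n-1)!}f\theta\wedge\w^{n-2}-\tfrac{1}{(n-2)!}\a^{1,1}_{J,0}\wedge\theta\wedge\w^{n-3},\qquad C_{n}=\tfrac{1}{(n-1)!}+\tfrac{1}{(n-2)!}.$$
The pointwise bound on $\eta$ follows because $f\w$ and $\a^{1,1}_{J,0}$ are pointwise-orthogonal summands of $\a^{+}_{J}$.

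Next, I take the Gromov cutoffs $\{f_{\varepsilon}\}$ of Lemma \ref{L3} (compactly supported, $0\leq f_{\varepsilon}\leq1$, $f_{\varepsilon}\uparrow 1$, $|df_{\varepsilon}|\leq\varepsilon$) and expand
$$\int_{X}f_{\varepsilon}|\a^{+}_{J}|^{2}\,dvol=\langle d\eta,f_{\varepsilon}\ast\a^{+}_{J}\rangle_{L^{2}(X)}-C_{n}\langle df\wedge\theta\wedge\w^{n-2},f_{\varepsilon}\ast\a^{+}_{J}\rangle_{L^{2}(X)}.$$
Integration by parts together with $\ast\ast=1$ on $2$-forms and $d\a^{+}_{J}=0$ give $d^{\ast}(f_{\varepsilon}\ast\a^{+}_{J})=\pm\ast(df_{\varepsilon}\wedge\a^{+}_{J})$, so the first term is bounded by $\|\eta\|_{L^{2}(X)}\cdot\varepsilon\|\a^{+}_{J}\|_{L^{2}(X)}\lesssim\varepsilon\|\theta\|_{L_{\infty}}\|\a^{+}_{J}\|_{L^{2}(X)}^{2}$. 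The second term is controlled by Cauchy--Schwarz and the pointwise constancy of $|\w|$ as $\lesssim\|\theta\|_{L_{\infty}}\|df\|_{L^{2}(X)}\|\a^{+}_{J}\|_{L^{2}(X)}$. Sending $\varepsilon\to0$ (monotone convergence on the left) and dividing by $\|\a^{+}_{J}\|_{L^{2}(X)}$ gives the claimed inequality after squaring.

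For the final statement, note $\La\a^{+}_{J}=f\La\w+\La\a^{1,1}_{J,0}=nf$ since $\a^{1,1}_{J,0}$ is primitive, whence $d^{\La}\a^{+}_{J}=[d,\La]\a^{+}_{J}=d(nf)-\La(d\a^{+}_{J})=n\,df$. Therefore $d^{\La}\a^{+}_{J}=0$ forces $df=0$, and the inequality already established then forces $\a^{+}_{J}=0$. The step I expect to be most delicate is verifying that after all the Leibniz manipulations the error term is genuinely proportional to $df$ with no residual $\a^{1,1}_{J,0}$-contribution outside the exact part $d\eta$; this is exactly where the closedness hypothesis $d\a^{+}_{J}=0$ is indispensable, and where the restriction $n\geq3$ enters (so that $\w^{n-2}$ can itself be written using $d\theta$).
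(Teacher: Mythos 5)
Your proof is correct and rests on the same two pillars as the paper's: the pointwise-orthogonal splitting $\a^{+}_{J}=f\w+\a^{1,1}_{J,0}$ together with the relation $d\a^{1,1}_{J,0}=-df\wedge\w$ forced by $d\a^{+}_{J}=0$, and the identity $d^{\La}\a^{+}_{J}=n\,df$ for the final vanishing statement. Where you diverge is in how the $d$(bounded) hypothesis is exploited. The paper simply invokes Proposition \ref{P1} twice --- once for the primitive $0$-form $f$ and once for the primitive $(1,1)$-form $\a^{1,1}_{J,0}$ (both of degree $<n$ since $n\geq3$) --- and then converts $\|d\a^{1,1}_{J,0}\|^{2}$ into a multiple of $\|df\|^{2}$ via the quasi-isometry of $L$ on $1$-forms, so that both component estimates are expressed through $\|df\|$ alone. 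You instead inline the mechanism of Proposition \ref{P1}: you assemble a single identity $\ast\a^{+}_{J}=d\eta-C_{n}\,df\wedge\theta\wedge\w^{n-2}$ (your Leibniz computation checks out, including the signs coming from the Weil relation on the $(1,1)$-piece) and rerun the Gromov cutoff pairing on the combined form. The two routes cost about the same; yours is longer but has the advantage of being genuinely self-contained on non-compactly-supported data --- note that Proposition \ref{P1} is stated for forms in $\Om^{k}_{0}$, so the paper's componentwise application tacitly assumes its extension to $L^{2}$ forms, which your cutoff argument handles explicitly. Two cosmetic points: the convergence $\int_{X}f_{\varepsilon}|\a^{+}_{J}|^{2}\rightarrow\|\a^{+}_{J}\|^{2}_{L^{2}(X)}$ is by dominated (not monotone) convergence, since the $f_{\varepsilon}$ need not be increasing; and you should dispose at the outset of the case $df\notin L^{2}$, where the claimed inequality is vacuous, so that the Cauchy--Schwarz bound on your error term is meaningful.
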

\begin{proof}
If $\a^{+}_{J}\in H^{+}_{(2);J}(X)$, i.e., $d\a^{+}_{J}=0$. Therefore, we have
$$df\wedge\w+d\a_{J,0}^{1,1}=0\Rightarrow \|df\|^{2}=(n-1)\|d\a^{1,1}_{J,0}\|^{2}.$$
Following the Proposition \ref{P1}, we get
$$\|\theta\|^{-2}_{L_{\infty}}\|f\|^{2}_{L^{2}(X)}\lesssim\|df\|^{2}_{L^{2}(X)}$$
and
$$\|\theta\|^{-2}_{L_{\infty}}\|\a^{1,1}_{J,0}\|^{2}_{L^{2}(X)}\lesssim\|d\a^{1,1}_{J,0}\|^{2}_{L^{2}(X)}.$$
Therefore, we get
$$c_{n}\|\theta\|^{-2}_{L_{\infty}}\|\a^{+}_{J}\|^{2}_{L^{2}(X)}\leq\|df\|^{2}_{L^{2}(X)}.$$
Noticing that $d^{\La}\a^{+}_{J}=d\La\a_{J}^{+}=ndf$. Therefore, if $\a^{+}_{J}\in\ker d^{\La}$, i.e. $df=0$, then
$\a^{+}_{J}=0$.
\end{proof}
\section*{Acknowledgements}
We would like to thank the anonymous referees for careful reading of my manuscript and helpful comments. We would like to thank Professor H.Y. Wang for drawing our attention to the symplectic parabolic manifold and generously helpful suggestions about these. This work was supported in part by NSF of China (11801539) and the Fundamental Research Funds of the Central Universities (WK3470000019), the USTC Research Funds of the Double First-Class Initiative (YD3470002002). Part of this article was completed when the author visited the Chern Institute of Mathematics in Nov. 2018. The author would like to thank the Institute for the hospitality.

	\bigskip
	\footnotesize

\end{document}